\documentclass[11pt]{amsart}

\usepackage[T1]{fontenc}
\usepackage{lmodern}
\usepackage{microtype}
\usepackage[a4paper,margin=28mm]{geometry}
\usepackage{amsmath,amssymb,amsthm,mathtools}
\usepackage{enumitem}
\usepackage{mathrsfs}
\usepackage[hidelinks]{hyperref}
\usepackage{todonotes}
\usepackage{comment}

\newtheorem{theorem}{Theorem}[section]
\newtheorem{proposition}[theorem]{Proposition}
\newtheorem{lemma}[theorem]{Lemma}

\theoremstyle{definition}

\theoremstyle{remark}
\newtheorem{remark}[theorem]{Remark}

\numberwithin{equation}{section}

\newcommand{\C}{\mathbb C}
\newcommand{\E}{\mathbb E}
\newcommand{\F}{\mathscr F}

\renewcommand{\P}{\mathbb P}
\newcommand{\cP}{\mathcal P}
\newcommand{\Q}{\mathbb Q}
\newcommand{\R}{\mathbb R}

\newcommand{\1}{\mathbf 1}
\newcommand{\cM}{\mathsf{M}}
\newcommand{\cD}{\mathsf D}

\newcommand{\norm}[1]{\lVert #1\rVert}
\newcommand{\abs}[1]{\lvert #1\rvert}

\newcommand{\spn}{\operatorname{span}}
\newcommand{\dd}{\,\mathrm d}

\newcommand{\Sigmaa}[1]{\Sigma_{#1}}
\newcommand{\one}{\1}

\newcommand{\wt}{\widetilde}

\title[Continuity of Stochastic Convolutions]{No Free Lunch for Continuity of Stochastic Convolutions}

\author{Mark Veraar}
\address[Mark Veraar]{Delft Institute of Applied Mathematics\\
Delft University of Technology \\ P.O. Box 5031\\ 2600 GA Delft\\The
Netherlands.} 
\email{m.c.veraar@tudelft.nl} 

\author{Joris van Winden}
\address[Joris van Winden]{Mathematical Institute, Leiden University\\ Einsteinweg 55\\ 2333 CC Leiden \\ The Netherlands.}
\email{j.van.winden@math.leidenuniv.nl} 

\thanks{The first author has received funding from the VICI subsidy VI.C.212.027 of the Netherlands Organisation for Scientific Research (NWO). The second author has received funding from the VICI subsidy VI.C.242.087 of the Netherlands Organisation for Scientific Research (NWO)}

\subjclass[2020]{Primary 60H15, 47D06; Secondary 60H05, 60G17, 47A60.}

\keywords{Stochastic convolution, maximal inequality, path continuity, analytic semigroup, sectorial operator, square-function estimate, $H^\infty$-calculus.}

\begin{document}

\begin{abstract}
We construct compact, exponentially stable $C_0$-semigroups $S$
on Hilbert spaces $X$ such that, for every $T>0$, the stochastic
convolution
\[
    U_g(t)=\int_0^t S(t-s)g(s)\,\dd\beta_s
\]
is unbounded on $[0,T]$ with positive probability for some predictable
$g\in L^\infty(\Omega;L^2(0,T;X))$, where $\beta$ is a real Brownian
motion. One example is even an analytic semigroup. For the other, the negative generator
$A$ has sectorial angle $\pi/2$ and a bounded
$H^\infty$-calculus.

Our main tool is a necessary condition: for exponentially stable
semigroups, an $L^2$-maximal estimate on $\mathbb R_+$ forces a lower
square-function estimate for the negative generator.
We deduce this implication from a new identity involving the
stochastic convolution, the subordinated Poisson semigroup, and
a stopped Brownian motion.
Combining this condition with Schauder multipliers on a conditional
trigonometric basis yields counterexamples to the maximal estimate.
An extrapolation argument then produces the integrands with
unbounded stochastic convolutions.

Finally, an energy-adapted chaining argument gives continuity and
maximal estimates for arbitrary $C_0$-semigroups under the stronger
condition $g\in L^2([0,T]; L^\infty(\Omega;X))$.
\end{abstract}

\maketitle

\section{Introduction}
\label{sec:introduction}
Let \(-A\) generate a $C_0$-semigroup
\(S=(S(t))_{t\geq 0}\) on a separable Hilbert space \(X\).  Let  
\((\Omega,\F,\P)\) be a complete probability space with a filtration $(\F_t)_{t\geq 0}$ carrying a real
Brownian motion \((\beta_t)_{t \geq 0}\).  For a predictable process \(g : \R_+ \times \Omega \to X\) which is square
integrable in time,  the stochastic convolution
\begin{equation}
 U_g(t):=\int_0^t S(t-s)g(s)\,\dd\beta_s
 \qquad t\geq0,
 \label{eq:mild-solution}
\end{equation}
is the mild solution of
\begin{equation}
 \dd U_g(t)+AU_g(t)\,\dd t=g(t)\,\dd\beta_t,
 \label{eq:stochastic-equation}
\end{equation}
with zero initial value.  Although the driving martingale
\(M(t)=\int_0^t g(s)\,\dd\beta_s\) has continuous paths, \(U_g\) is in
general not a martingale, and the Burkholder--Davis--Gundy inequality does
not directly control its pathwise supremum.

The long-standing endpoint problem addressed in this paper is therefore:
\begin{itemize}
    \item {\em Does \(U_g\) have a continuous version when
\(g\) is predictable and square integrable,
for an arbitrary \(C_0\)-semigroup \(S\)?}
\end{itemize}
Its quantitative form asks whether, for \(T>0\) and \(0<p<\infty\), the following \emph{maximal estimate} holds:
\begin{equation}
 \E\sup_{0\leq t\leq T}\norm{U_g(t)}^p
 \leq C_{p,T,S}^p
 \E\norm{g}_{L^2([0,T];X)}^p.
 \label{eq:intro-maximal-inequality}
\end{equation}
Kotelenez proved an energy estimate for contraction semigroups in
1982~\cite{Kotelenez1982}, but the corresponding assertion for an arbitrary
\(C_0\)-semigroup remained unresolved.  Da Prato and Zabczyk explicitly
recorded the \(L^2\)-in-time problem as open in
\cite[p.~144]{DPZ92}.  Hausenblas and Seidler later stated the
path-continuity question~\cite[p.~108]{HausenblasSeidler2008}.  In 2020, van Neerven
and Veraar isolated the precise formulation used here as Question~6.1 and
emphasised that it was open even for Hilbert spaces
\cite{vanNeervenVeraar2020}.  Thus, from Kotelenez's 1982 paper until 2026,
the endpoint question remained open for at least forty-four
years.

In this article we resolve it in the negative. We construct two different examples in which the stochastic convolution is unbounded with positive probability, even for one-dimensional Brownian noise and integrands $g$ with uniformly bounded total energy $\|g\|_{L^2(0,T;X)}\leq C$ a.s. Both semigroups are compact and exponentially stable; one is analytic, and the for the other the leading operator $A$ admits a bounded $H^\infty$-functional calculus (which is known to imply \eqref{eq:intro-maximal-inequality} in the case of analytic semigroups). We also prove a complementary positive result: continuity and maximal estimates hold for arbitrary $C_0$-semigroups whenever the integrand is non-random.

\subsection{Overview of sharp maximal inequalities}
For the sharp \(L^2\)-in-time maximal estimate, three proof mechanisms have emerged:
\begin{enumerate}[label=\textup{(\arabic*)},leftmargin=*,itemsep=1pt,topsep=3pt]
\item time discretisation;
\item It\^o's formula;
\item group dilation theory.
\end{enumerate}

Kotelenez's original Hilbert space argument belongs to the first category
and requires the semigroup to be contractive~\cite{Kotelenez1982}.  A later
approach, close in spirit to discretisation, yields \(L^p\)-maximal
estimates in \(2\)-smooth spaces with the sharp \(O(\sqrt p)\) dependence
as \(p\to\infty\)~\cite{vanNeervenVeraar2022}.  It uses martingale
inequalities of Pinelis~\cite{Pinelis}, which Seidler had earlier used to
obtain the corresponding \(O(\sqrt p)\) Burkholder--Davis--Gundy bound for
ordinary stochastic integrals in \(2\)-smooth spaces~\cite{Sei10}.  Surprisingly, the
discretisation argument is sufficiently flexible to allow the generator,
or more generally the evolution family, to depend adaptively on
\((t,\omega)\).

The second approach applies It\^o's formula to a suitable function of the
solution, e.g. $\|x\|^p$.  In Hilbert spaces this was developed by Tubaro~\cite{Tubaro1984}
and Ichikawa~\cite{Ichikawa1986}.  Brze\'zniak and Peszat gave an
extension to a subclass of the  \(2\)-smooth Banach spaces~\cite{BrzezniakPeszat2000}, and
van Neerven and Zhu subsequently obtained the result for all \(2\)-smooth
spaces~\cite{vanNeervenZhu2011} by an extension of the It\^o formula to all such spaces. A direct application of It\^o's formula
to \(x\mapsto\norm{x}^p\) typically gives constants of order \(p\).  The
sharp order \(\sqrt p\) can nevertheless be recovered by first proving an exponential
tail estimate, using It\^o's formula for a regularisation such as
\(x\mapsto(1+\lambda\norm{x}^2)^{1/2}\), see
\cite{BrzezniakPeszat2000,vanNeervenVeraar2022}. In \cite{coxvw2024sharp} it was shown that these tail bounds can be used to reproduce the $\sqrt{p}$ behavior for $p\to \infty$. 

Both the time discretisation and It\^o's formula argument require a (quasi-)contractive
semigroup or evolution family. A group dilation argument has the advantage that the original semigroup need not itself be contractive: once the convolution is represented by a group on a larger space, the
maximal estimate follows directly from the 
Burkholder--Davis--Gundy inequality, and consequently gives the optimal order \(\sqrt p\) and often even the best possible constant.  This method was used by Hausenblas and Seidler for contraction semigroups on
Hilbert spaces and for positive contraction semigroups on \(L^q\)-spaces
\cite{HausenblasSeidler2001,HausenblasSeidler2008}.  In the analytic semigroup setting, the $H^\infty$-functional-calculus of angle $<\pi/2$ can be used to construct group 
dilations on \(2\)-smooth spaces; the results of Seidler~\cite{Sei10} and
Veraar and Weis~\cite{VeraarWeis2011} provided the first non-contractive
instances of the sharp endpoint estimate. 

If one is willing to strengthen the time integrability, continuity is
available for arbitrary \(C_0\)-semigroups.  The factorisation method of Da
Prato, Kwapie\'n, and Zabczyk~\cite{DaPratoKwapienZabczyk1987, DPZ92} gives the
result when \(g(\cdot,\omega)\in L^p(0,T;X)\) almost surely for some
\(p>2\); subsequent steps
include \cite{Seidler1993,Brzezniak1997,Ver10JEE}, where space-time regularity was obtained in the parabolic setting. The strict inequality \(p>2\)
is intrinsic to this argument.  In the parabolic setting the
factorisation method can also be replaced by fractional Sobolev estimates.
This has the additional advantage that the operator may depend on
\((t,\omega)\) in an adapted way; see \cite{PronkVer}.

\begin{remark}
    The positive results recalled above are often formulated for an \(H\)-cylindrical Wiener process, with integrands taking values in \(\mathscr{L}_2(H,X)\), the space of Hilbert--Schmidt operators.
Our counterexample is formulated for the special case of a real Brownian motion, corresponding to \(H=\mathbb R\) and the canonical identification \(\mathscr{L}_2(H,X) =  X\).
\end{remark}

\subsection{Main results}
The above results have required substantial work and exploit additional structure of the semigroup. Nevertheless, it remained conceivable that this structure was needed only by the available proof methods, and that strong continuity alone would suffice for path continuity at the sharp endpoint. Our counterexamples show that there is no such free lunch.

Our first main result shows that the maximal estimate and path continuity can fail for a certain analytic semigroup.

\begin{theorem}[Example of an analytic semigroup]
\label{thm:main}
There exist a separable Hilbert space \(X\) and a
densely defined operator \(A\) on \(X\) with the following properties.
\begin{enumerate}[label=\textup{(\roman*)}]
\item\label{it:main1} \(-A\) generates a compact and exponentially stable analytic semigroup
\(S\); the operator \(A\) is sectorial of angle zero.
\item\label{it:main2} There exists a stochastic basis and a 
predictable process $g\in L^\infty\bigl(\Omega;L^2([0,1];X)\bigr)$
such that $U_g$ as defined in \eqref{eq:mild-solution}
satisfies
\begin{equation*}
 \P\Big(
  \sup_{t\in[0,1]\cap\Q}\norm{U_g(t)}=\infty
 \Big)>0.
\end{equation*}
In particular, \(U_g\) has no version with almost surely bounded or continuous paths and \eqref{eq:intro-maximal-inequality} does not hold.
\end{enumerate}
\end{theorem}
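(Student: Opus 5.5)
The proof has three stages, following the abstract: a necessary condition, a concrete operator that violates it, and an extrapolation step that upgrades failure of the estimate to unbounded paths.

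\textbf{Step 1: the necessary condition.} Assume the $L^2$-maximal estimate
\[
\E\sup_{t\ge0}\norm{U_g(t)}^2\le C\,\E\norm{g}_{L^2(\R_+;X)}^2
\]
holds for an exponentially stable $S$. I would show that this forces the lower square-function estimate
\[
\norm{x}^2\lesssim\int_0^\infty\norm{tA e^{-tA^{1/2}}x}^2\,\frac{\dd t}{t},
\]
or the analogous estimate for a suitable power $A^{\alpha}$.

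The identity I have in mind comes from subordination. Let $\tau$ be the hitting time of level $s$ by a Brownian motion. Then $\E e^{-\tau A}=e^{-s\sqrt{2A}}$, so the Poisson semigroup $e^{-s\sqrt{2A}}$ is obtained by evaluating $S$ at a random time. The plan is to choose $g$ supported on a stochastic interval determined by a stopped Brownian motion, so that $U_g$ evaluated at the stopping time, averaged over the randomness, reproduces Poisson-semigroup expressions applied to $x$. The supremum then dominates $U_g$ at this random time. Taking expectations and using It\^o's isometry on the right-hand side should give
\[
\norm{x}\lesssim\norm{\text{square function of }x}.
\]
It is essential that $g$ is genuinely random and adapted here. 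For deterministic $g$ the maximal estimate holds, as the paper's positive result on non-random integrands shows.

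\textbf{Step 2: an operator violating the lower estimate.} Take $X=L^2(\mathbb T,w)$ with a weight $w$ such that the trigonometric system $(e_k)$ is a conditional (non-unconditional) basis. Define $A$ as a Schauder multiplier, $Ae_k=\lambda_k e_k$, with $\lambda_k\to\infty$ increasing rapidly and real. Because the basis is Schauder, the spectral projections $P_n$ are uniformly bounded, and summation by parts gives uniform bounds on $e^{-zA}$ for $z$ in any sector of angle $<\pi/2$. This yields analyticity, sectoriality of angle zero, compactness (since $\lambda_k\to\infty$ and the $P_n$ converge strongly) and exponential stability (since $\lambda_k\ge1$).

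The square function $\int_0^\infty\norm{\phi(tA)x}^2\,\frac{\dd t}{t}$ is essentially $\sum_k\norm{c_ke_k}^2$ blocked dyadically when the $\lambda_k$ grow lacunarily. Conditionality of the basis then lets me pick $x$ for which the lower estimate fails; equivalently, $A$ has no bounded $H^\infty$-calculus, while the upper estimate may still hold. This contradicts Step 1 on $\R_+$.

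\textbf{Step 3: extrapolation to unbounded paths.} First localise from $\R_+$ to $[0,1]$ using exponential stability. Then argue by contradiction with a closed-graph or Banach–Steinhaus argument. Suppose that for every predictable $g$ with $\norm{g}_{L^2}\le1$ almost surely the paths of $U_g$ are almost surely bounded. Consider the map $g\mapsto\sup_t\norm{U_g(t)}$ into $L^0$, which is continuous in probability. A Maurey–Nikishin / Kwapie\'n type extrapolation then yields a bound in $L^p$ for some $p$, after possibly a change of measure, which is handled by stopping. This contradicts Step 2, quantitatively through finite-dimensional truncations $P_n$ with constants blowing up. Finally, glue independent pieces on disjoint time intervals accumulating at $1$ to produce a single $g\in L^\infty(\Omega;L^2)$ with $\P(\sup=\infty)>0$.

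\textbf{Main obstacle.} I expect Step 1 to be the hardest: finding the exact identity that links the stochastic convolution at a stopping time to the subordinated Poisson semigroup, with constants that yield a lower bound rather than an upper one.
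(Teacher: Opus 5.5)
\textbf{Verdict: genuine gap.} Your Steps 2 and 3 follow the paper's overall architecture, but Step 1, which is the heart of the proof, is left as a heuristic. The heuristic does not by itself produce the identity you need.

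\textbf{Step 1 (main gap).} Knowing $\E e^{-\tau A}=e^{-s\sqrt{2A}}$ does not let you average $U_g(\tau)=\int_0^\tau S(\tau-r)g(r)\dd\beta_r$. The factor $S(\tau-r)$ anticipates, so it cannot be pulled through the stochastic integral. Indeed, if $\tau$ were a hitting time of a Brownian motion independent of $\beta$, you would simply get $\E U_g(\tau)=0$. The nonzero mean comes entirely from the covariation between the hitting time and the driving noise, and the missing ingredient is the identity that captures it. Take $b^a_t=a+\beta_t$ driven by the \emph{same} Brownian motion, let $\tau^a$ be its hitting time of $0$, and set $C=(2A)^{1/2}$, $P_r=e^{-rC}$. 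Apply It\^o's formula to $\langle U_g(t),P^*_{b^a_t}y\rangle$. Because $\tfrac12(C^*)^2=A^*$, the It\^o correction cancels the drift $-\langle AU_g,P^*_{b^a_t}y\rangle\dd t$ exactly, and only the cross-variation term survives. Optional stopping then gives the identity below. Uniform integrability comes from the assumed maximal estimate together with the occupation formula $\E\int_0^{\tau^a}f(b^a_s)\dd s=2\int_0^\infty(a\wedge r)f(r)\dd r$.
\[
\E\langle U_g(\tau^a),y\rangle=-\E\int_0^{\tau^a}\langle g(s),C^*P^*_{b^a_s}y\rangle\dd s,\qquad y\in\cD(A^*).
\]
With this identity your primal plan does work. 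For $x\in\cD(A)$ choose $g(s)=-\one_{\{s<\tau^a\}}CP_{b^a_s}x$. Then $\E U_g(\tau^a)=2\int_0^\infty(a\wedge r)C^2P_{2r}x\dd r\to x/2$ and $\E\norm{g}_{L^2(\R_+;X)}^2\to\norm{x}_{\phi,A}^2$ as $a\to\infty$. Hence $\norm{x}\le 2\cM_{2,\R_+}(S)\norm{x}_{\phi,A}$. The paper instead tests against all $g$, obtains the upper estimate for $A^*$, and dualises. Also, your displayed square function is not scale invariant: on an eigenvector with eigenvalue $\lambda$ it equals $\tfrac{\lambda}{4}\norm{x}^2$. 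You want $(tA)^{1/2}e^{-(tA)^{1/2}}$, and the right-hand side is computed by the occupation formula, not by It\^o's isometry.

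\textbf{Step 2 (choice of weight).} ``A weight for which the trigonometric system is conditional'' is not enough. Failure of a bounded $H^\infty$-calculus is not the same as failure of the lower estimate. For lacunary $\lambda_k$ the Schur test gives $\norm{\sum_ka_ke_k}_{\phi,A}\lesssim\norm{(a_k)}_{\ell^2}$, so you need the basis to violate $\norm{\sum_ka_ke_k}\lesssim\norm{(a_k)}_{\ell^2}$. For $w=|\theta|^{\alpha}$ the conditionality sits on the other side: there $\norm{x}\le\norm{(a_k)}_{\ell^2}$. For $\mu$ large the lower estimate then actually holds, so Step 1 gives no contradiction. The paper uses $w=|\theta|^{-\alpha}$ with $0<\alpha<1$; there the Dirichlet kernels satisfy $\norm{x_n}^2\gtrsim n^{1+\alpha}$ while $\norm{x_n}_{\phi,A}^2\lesssim n$.

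\textbf{Step 3 (extrapolation).} The integrands from Step 1 are not in $L^\infty(\Omega;L^2)$, and the identity needs $\cM_{2,\R_+}(S)<\infty$. So you must upgrade a tail bound for integrands in $L^\infty(\Omega;L^2(I;X))$ to an $L^2(\Omega)$ maximal estimate. A Nikishin-type factorisation only gives weak-type bounds after an operator-dependent change of density, under which $\beta$ is no longer Brownian. Your phrase ``handled by stopping'' is precisely the nontrivial part. The paper uses Burkholder's good-$\lambda$ inequality: restart at the first time $\norm{U_g}$ reaches $\lambda$, and apply the tail bound on the conditioned, time-shifted stochastic basis. This is why the Baire-category constant $K_\varepsilon$ must first be made uniform over stochastic bases. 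The paper obtains that uniformity by symmetrisation on a product of bases with $\beta=\sum_n2^{-n/2}\beta^n$.
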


As mentioned before, for analytic semigroups, boundedness of the $H^\infty$-functional calculus implies the maximal estimate  (see \cite{Sei10} and \cite{VeraarWeis2011}), which we will prove in an alternative way in Proposition \ref{prop:Maxsquare}. 
This raises the question whether the condition that $S$ is analytic can be omitted.
The next result shows that this is not the case.

\begin{theorem}[Example with a bounded $H^\infty$-calculus]
\label{thm:main2}
There exist a separable Hilbert space \(X\) and a
densely defined operator \(A\) on \(X\) with the following properties.
\begin{enumerate}[label=\textup{(\roman*)}]
\item\label{it1:main2} \(-A\) generates a compact and exponentially stable $C_0$-semigroup
\(S\); the operator \(A\) is sectorial of angle $\pi/2$.
\item\label{it2:main2} $A$ has a bounded $H^\infty$-calculus with $\omega_{H^\infty}(A) = \pi/2$. 
\item\label{it3:main2} There exists a stochastic basis and a predictable process $g\in L^\infty\bigl(\Omega;L^2([0,1];X)\bigr)$
such that $U_g$ as defined in \eqref{eq:mild-solution}
satisfies
\begin{equation*}
 \P\Big(
  \sup_{t\in[0,1]\cap\Q}\norm{U_g(t)}=\infty
 \Big)>0.
\end{equation*}
In particular, \(U_g\) has no version with almost surely bounded or continuous paths and \eqref{eq:intro-maximal-inequality} does not hold.
\end{enumerate}
\end{theorem}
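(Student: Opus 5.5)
We follow the strategy announced in the abstract, specialised to a Schauder multiplier with angle $\pi/2$ and a bounded $H^\infty$-calculus. \textbf{Step 1} is the necessary condition. For an exponentially stable semigroup, an $L^2$-maximal estimate on $\R_+$,
\[
 \E\sup_{t\ge0}\norm{U_g(t)}^2\le C\,\E\norm{g}_{L^2(\R_+;X)}^2,
\]
forces the lower square-function estimate
\[
 \norm{x}^2\lesssim\int_0^\infty\norm{A^{1/2}e^{-tA^{1/2}}x}^2\dd t .
\]
The argument uses the identity relating $U_g$ at the hitting time of a Brownian motion to the subordinated Poisson semigroup $e^{-tA^{1/2}}$. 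Concretely, take $g(s)=S(\cdot)^*$-type test integrands $g(s)=\1_{[0,\tau)}(s)\,y$ with $\tau$ the exit time of a stopped Brownian motion. Then $\E U_g(\tau)$ against a fixed vector produces the Laplace transform $\E e^{-\tau A}=e^{-c\sqrt{A}}$. Dualising yields the square function. We take this implication as the main tool, established earlier in the paper.

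\textbf{Step 2} is the construction of $A$. We let $(e_k)$ be a conditional (non-unconditional) basis of $X$ given by a weighted trigonometric system $e^{ik\theta}w(\theta)$ in $L^2(w)$, with $w$ an $A_2$-type weight chosen so that the basis is conditional but Schauder. We then set
\[
 A e_k=\lambda_k e_k,\qquad \lambda_k=2^{k}+i\mu_k ,
\]
where the imaginary parts $\mu_k$ grow fast enough that $\arg\lambda_k\to\pi/2$, so the sectoriality angle is exactly $\pi/2$. The entries are arranged so that the multiplier sequences $(f(\lambda_k))$ have bounded variation uniformly for $f\in H^\infty(\Sigma_\theta)$, $\theta>\pi/2$. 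By the standard Schauder multiplier theorem (bounded variation multipliers are bounded on a Schauder basis) this gives a bounded $H^\infty$-calculus of angle $\pi/2$. The semigroup $e^{-tA}$ is again a bounded-variation multiplier, so it is $C_0$. It is exponentially stable since $\Re\lambda_k\ge2$, and compact since $|e^{-t\lambda_k}|\to0$ makes it a norm limit of finite-rank partial-sum operators. This proves (i) and (ii).

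\textbf{Step 3}, the main obstacle, is showing that the lower square-function estimate fails. One computes
\[
 \int_0^\infty\norm{A^{1/2}e^{-tA^{1/2}}x}^2\dd t
\]
for $x=\sum_{k\le N}\epsilon_k e_k$. If this were comparable to $\sum|\epsilon_k|^2$-type quantities while $\norm{x}$ is large for suitable signs, the estimate would fail. The point to exploit is that $\sqrt{\lambda_k}$ has angle about $\pi/4$. The Poisson semigroup therefore does not see the oscillation, and the square function behaves like a Rademacher-averaged (unconditional) norm. I would first try to show that the square function is dominated by a block-diagonal expression, using the lacunarity of $|\lambda_k|$ so that cross terms are summable. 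Conditionality of $(e_k)$ then provides vectors with $\norm{x}\gg$ the Rademacher average. Note that the upper estimate can hold, consistent with the bounded $H^\infty$-calculus. Only the lower estimate should fail, and this is exactly where the angle $\pi/2$, rather than $<\pi/2$, enters.

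\textbf{Step 4} is extrapolation. Failure of the $L^2$-maximal estimate on $\R_+$ transfers to $[0,1]$ by exponential stability and scaling. It also transfers to failure of
\[
 \E\sup_t\norm{U_g(t)}^p\le C\,\norm{g}^p_{L^\infty(\Omega;L^2)}
\]
for all $p$: by a good-$\lambda$/Lenglart-type argument, weak estimates with bounded integrands would imply strong ones. This yields predictable $g_n$ with $\norm{g_n}_{L^\infty(\Omega;L^2(0,1;X))}\le 2^{-n}$ but $\P(\sup_t\norm{U_{g_n}}>n)\ge\delta$. We then concatenate them on disjoint time intervals $[1-2^{-n},1-2^{-n-1}]$, rescaling time using the semigroup, and use independence of the Brownian increments together with Borel--Cantelli. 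This gives a single $g$ with $\P(\sup_{t\in[0,1]\cap\Q}\norm{U_g(t)}=\infty)>0$, proving (iii).
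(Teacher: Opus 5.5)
\textbf{There is a genuine gap: your Steps 2 and 3 cannot both hold, and this incompatibility is exactly where the difficulty of the theorem lies.}

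\emph{Why Step 3 is impossible.} The function $\phi(z)=z^{1/2}e^{-z^{1/2}}$ belongs to $H_0^\infty(\Sigma_\theta)$ for every $\theta<\pi$. Item (ii) requires a bounded $H^\infty(\Sigma_\theta)$-calculus for every $\theta\in(\pi/2,\pi)$. By McIntosh's theorem this gives the \emph{two-sided} bound $\norm{x}\eqsim\norm{x}_{\phi,A}$; this is precisely item (ii) of Theorem~\ref{thm:endpoint-counterexample}. So no operator satisfying (ii) fails the lower square-function estimate, and Theorem~\ref{thm:necessary-square} applied to $A$ itself can never give $\cM_{2,\R_+}(S)=\infty$.

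Your remark that ``only the lower estimate should fail, and this is where the angle $\pi/2$ enters'' is backwards. The angle $\pi/2$ breaks the \emph{sufficiency} direction: Proposition~\ref{prop:Maxsquare} uses $\psi(z)=z^{1/2}e^{-z}$, which needs $\omega(A)<\pi/2$. It does not break the lower $\phi$-estimate.

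\emph{Why Step 2 also fails.} The imaginary powers $z^{is}$, $s\in[0,1]$, are uniformly bounded on $\Sigma_\theta$. Yet $(\lambda_k^{is})_k$ cannot have variation bounded uniformly in $s$, because $\log\abs{\lambda_k}\to\infty$. More fundamentally, suppose $\abs{\lambda_k}$ is lacunary, as you want in Step 3. Then the points $\lambda_k\in\Sigma_{\pi/2}$ form an interpolating sequence for $H^\infty(\Sigma_\theta)$. A bounded calculus would then make every bounded multiplier $e_k\mapsto m_ke_k$ bounded, forcing $(e_k)$ to be unconditional. A diagonal operator on a conditional basis with lacunary spectrum has no bounded $H^\infty$-calculus.

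\textbf{The missing idea} is Proposition~\ref{prop:unimodular}: $\cM_{p,I}$ is unchanged under $S(t)\mapsto e^{-i\eta t}S(t)$. So the necessary condition can be applied to the \emph{unshifted} generator, which may fail the lower estimate, while the \emph{shifted} generator carries the good calculus.

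A shift only helps when it dominates the operator. The paper therefore works with finite-dimensional compressions $A_n=A|_{X_n}$, where $X_n=\spn\{f_1,\dots,f_{2n+1}\}$, of the multiplier from Section~\ref{sec:schauder}. It sets $B_n=n(A_n+iK\norm{A_n}I)$.
\begin{itemize}
\item After rescaling, $B_n$ is a small perturbation of $iI$. Hence it has an $H^\infty(\Sigma_\omega)$-calculus for $\omega\in(\pi/2,\pi)$, uniformly in $n$.
\item On the other hand, $\cM_{2,\R_+}(S_n)=\cM_{2,\R_+}\bigl((e^{-tnA_n})_{t\ge0}\bigr)\gtrsim n^{\alpha/2}$. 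This follows from Theorem~\ref{thm:necessary-square} tested on the Dirichlet kernels $x_n$.
\item The orthogonal sum $\bigoplus_n B_n$ then has a bounded calculus of angle $\pi/2$. Its maximal constant dominates that of every block and is therefore infinite (Proposition~\ref{prop:endpoint-blocks} and Theorem~\ref{thm:endpoint-counterexample}).
\end{itemize}

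Your Step 4 concatenates integrands on shrinking time intervals instead of using the paper's Baire-category and product-space argument. That is a reasonable alternative once all the $g_n$ are realised on a single stochastic basis, but it cannot repair Steps 2--3.
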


Note that the driving martingales in the examples above are nevertheless continuous and have
uniformly bounded pathwise quadratic variation.  The obstruction is
therefore created by the interaction with the semigroup, rather than by
cylindrical noise as is the case in \cite{IscoeEtAl1990}.

Our final main result is a complementary positive endpoint.  For
an arbitrary \(C_0\)-semigroup, Theorem~\ref{thm:chaining} gives continuity
and a maximal estimate, with \(O(\sqrt p)\) moment growth, under the stronger
deterministic-envelope assumption
\[
 \int_0^T\norm{g(t,\cdot)}_{L^\infty(\Omega;X)}^2\,\dd t<\infty.
\]
This displays the distinction between
\(L^\infty(\Omega;L^2([0,T];X))\) and deterministic pointwise control in
\(L^2([0,T])\). In particular, for any nonrandom $g\in L^2(0,T;X)$, $U_g$ has continuous paths. This implies that randomness of the processes $g$ in Theorems \ref{thm:main} and \ref{thm:main2} is necessary. 

\subsection{Proof strategy and organisation}
The proof of Theorems~\ref{thm:main} and~\ref{thm:main2} consists of three main steps.
The first (and perhaps the most significant) step is to prove a new connection between square function estimates and maximal estimates. This is done in Section~\ref{sec:squarefunc}.
The main result there is Theorem~\ref{thm:necessary-square}, which shows that a maximal estimate for $S$ implies a certain lower square function estimate for the negative generator of $S$.

The second step is to construct an operator which has the good properties listed in Theorem~\ref{thm:main}, but fails the lower square function estimate.
This is done in Section~\ref{sec:schauder} using Schauder multipliers on a conditional Schauder basis consisting of trigonometric polynomials.
After that in Section \ref{sec:endpoint-calculus}, we construct a variant of the previous example which has the good properties listed in  Theorem \ref{thm:main2}, but still fails the maximal estimate \eqref{eq:intro-maximal-inequality}. 

The third step is to turn failure of the maximal estimate into failure of path continuity for predictable integrands in the space $L^\infty\bigl(\Omega;L^2([0,1];X)\bigr)$.
This is done in Section~\ref{sec:extrapolation} using known extrapolation arguments.

Finally, in Section~\ref{sec:special-processes} we prove Theorem~\ref{thm:chaining}, which states the positive result for a smaller class of integrands.

\subsection{Notation and preliminaries}
Throughout the paper, $-A$ generates a $C_0$-semigroup $(S(t))_{t \geq 0}$ (often denoted just by $S$) on a complex separable Hilbert space $X$.

\subsubsection{Probability}
A stochastic basis is a complete filtered probability space $(\Omega, \F, \P, (\F_t)_{t \geq 0})$ which carries a (real, one-dimensional) Brownian motion $(\beta_t)_{t \geq 0}$. Let $I = [0,T]$ or $I=\R_+$. 

For $p,q \in (0,\infty]$, we use the notation $L^p_{\cP}(\Omega;L^q(I;X))$ for the closed subspace of $L^p(\Omega;L^q(I;X))$ consisting of predictable processes.

As mentioned before, for a predictable process $g : I \times \Omega \to X$ which is $\P$-almost surely in $L^2(I;X)$, we write $U_g(t)$ for the process
\begin{equation}
    U_g(t) \coloneq \int_0^t S(t-s)g(s) \dd \beta_s, \qquad t \in I,
\end{equation}
using a continuous version whenever one is available.
It should be clear from context which semigroup is used at any point.

For $p \in (0,\infty)$, we write $\cM_{p,I}(S) \in [0,\infty]$ for the least admissible constant for which the inequality
\begin{equation}
 \left(\E\sup_{t\in I}\norm{U_g(t)}^p\right)^{1/p}
 \leq \cM_{p,I}(S)
 \norm{g}_{L^p(\Omega;L^2(I;X))}
 \label{eq:maximal-inequality}
\end{equation}
holds for all predictable step processes $g$.
Note that $\cM_{p,I}(S)$ may depend on the stochastic basis.
We say that $S$ satisfies a maximal estimate (with $p$ and $I$) if $\cM_{p,I}(S) < \infty$.
In this case, a standard approximation argument shows that $U_g$ has a continuous version for all $g \in L^p_{\cP}(\Omega;L^2(I;X))$ and~\eqref{eq:maximal-inequality} extends to all such $g$. In Remark \ref{rem:extrapolationMp} we show that $\cM_{p_0,I}(S) < \infty$ implies that $\cM_{p,I}(S) < \infty$ for all $p>0$.

In the special case where $S$ is the identity (thus $A=0$), the estimate \eqref{eq:maximal-inequality} reduces to 
\begin{align}
    \left(\E\sup_{t\in I}\norm{\int_0^t g(s) \dd \beta_s}^p\right)^{1/p} \leq B_p \norm{g}_{L^p(\Omega;L^2(I;X))},
 \label{eq:BDG}
\end{align}
which holds for any $p\in (0,\infty)$, as follows from the well-known Burkholder--Davis--Gundy inequalities. 
One can even show that $B_p\leq \frac{2p^{3/2}}{p-1}$ for $p>1$. In particular, $B_p \leq 4\sqrt{p}$ for $p\geq 2$. This is well-known and can be deduced from \cite[Theorem A]{CarlenKree} and \cite[Lemma A.2]{VeraarWeis2011}.

\subsubsection{Functional calculus}
Suppose that $A$ is an injective sectorial operator. We write $\omega(A)$ for the sectorial angle of $A$. For $\theta \in (0,\pi)$ we write $\Sigmaa{\theta}=\{z\in\C\setminus\{0\}:|\arg z|<\theta\}$, and let $H_0^\infty(\Sigmaa{\theta})$ consist of holomorphic functions on $\Sigma_{\theta}$ satisfying $|f(z)|\lesssim\min\{|z|^\varepsilon,|z|^{-\varepsilon}\}$
for some $\varepsilon>0$.

Let $\omega(A) < \theta < \pi$.
For $f \in H^{\infty}_0(\Sigma_{\theta})$, the usual Cauchy integral against
the resolvent defines the operator $f(A)\in\mathscr{L}(X)$. We say that $A$ has a
bounded $H^\infty(\Sigmaa{\theta})$-calculus if there is a constant $C_{\theta}$ such that  
\begin{align}\label{eq:Hinfty}
\norm{f(A)}\leq C_\theta\norm{f}_\infty \ \ \text{for all
$f\in H_0^\infty(\Sigmaa{\theta})$}. 
\end{align}
For nonzero $f \in H_0^\infty(\Sigmaa{\theta})$, define
\begin{equation}
 \norm{x}_{f,A}
 :=\left(\int_0^\infty\norm{f(tA)x}^2\,\frac{\dd t}{t}\right)^{1/2},
 \qquad x\in X,
 \label{eq:square-function}
\end{equation}
allowing the value $\infty$. McIntosh's theorem
\cite{McIntosh1986} (see also
\cite[Theorem~7.3.1]{Haase2006} and
\cite[Theorem~10.4.21]{HNVW2017}) states that $A$ has a bounded
$H^\infty(\Sigmaa{\theta})$-calculus if and only if
$\norm{x}_{f,A}\eqsim \norm{x}$ for all $x\in X$. Moreover, in that case 
\[\omega_{H^\infty}(A) := \inf\{\theta>\omega(A): \text{\eqref{eq:Hinfty} holds}\} = \omega(A).\]

For any nonzero $f_1, f_2\in
H_0^\infty(\Sigma_{\theta})$, McIntosh--Yagi
\cite[Theorem~5]{McIntoshYagi} gives
\[
 \norm{x}_{f_1,A}\eqsim \norm{x}_{f_2,A},
 \qquad x\in X,
\]
(with the implicit constants depending only on $f_1, f_2, A$) without assuming a bounded $H^\infty$-calculus for $A$.

\section{Square functions and maximal estimates}
\label{sec:squarefunc}

In this section, we relate square-function estimates to maximal estimates
for stochastic convolutions. 
We will use square functions for the cases
\begin{equation}
 \psi(z):=z^{1/2}e^{-z},\qquad
 \phi(z):=z^{1/2}e^{-z^{1/2}},
 \label{eq:defpsiphi}
\end{equation}
with the principal square root. 
Observe that
$\psi\in H_0^\infty(\Sigma_{\theta})$ for $0<\theta<\pi/2$, and
$\phi\in H_0^\infty(\Sigma_{\theta})$ for $0<\theta<\pi$.
In particular, if $\omega(A)<\pi/2$, then 
\begin{align}\label{eq:psiphi}
\norm{x}_{\psi,A}\eqsim \norm{x}_{\phi,A}, \ \ x\in X,
\end{align}
where the implicit constants only depend on $A$. Note that $\omega(A)<\pi/2$ is equivalent to $-A$ generating a bounded analytic semigroup (see \cite[Theorem G.5.2]{HNVW2017}).

To highlight the connection between square functions and maximal estimates, we begin by proving the following proposition.
The assumption that $\omega(A)<\pi/2$ is essential as we will see in Theorem \ref{thm:endpoint-counterexample}. 

\begin{proposition}[Square-function estimates imply maximal estimate]\label{prop:Maxsquare}
    Suppose that $\omega(A) < \pi / 2$ and that there exist $0 < c$ and $C < \infty$ such that we have the following two-sided square function estimate:
\begin{equation}
    \label{eq:sqrexp}
    c \norm{x}
    \leq \norm{x}_{\psi,A} \leq C \norm{x}, \quad x \in X.
\end{equation}
Then for any $p \in (0,\infty)$, $S$ satisfies a maximal estimate \eqref{eq:maximal-inequality} on $\R_+$ with constant
\begin{equation*}
    \cM_{p,\R_+}(S) \leq B_p\, c^{-1}C,
\end{equation*}
where $B_p$ is as in~\eqref{eq:BDG}.
\end{proposition}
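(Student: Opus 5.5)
The plan is to avoid dilations and square-function equivalences and instead realise $U_g(t)$, after applying the square-function embedding, as the restriction of a genuine martingale with values in the Hilbert space $L^2(\R_+;X)$. The Burkholder--Davis--Gundy inequality \eqref{eq:BDG}, applied in that Hilbert space, then gives the maximal bound. The key algebraic identity is that $\psi(z)=z^{1/2}e^{-z}$ satisfies, for $u>0$ and $r\ge 0$,
\[
 \psi(uA)S(r) = u^{1/2}A^{1/2}e^{-(u+r)A}.
\]
Thus the square function of $S(r)x$ at scale $u$ is $u^{1/2}$ times a single fixed function evaluated at the shifted time $u+r$.

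\emph{Step 1 (lifting the integrand).} Fix a predictable step process $g$. For $s\ge 0$ define $F_s\in L^2(\R_+;X)$ by $F_s(\tau)=A^{1/2}e^{-(\tau-s)A}g(s)$ for $\tau>s$ and $F_s(\tau)=0$ otherwise. Substituting $w=\tau-s$ gives
\[
 \norm{F_s}_{L^2(\R_+;X)}^2=\int_0^\infty\norm{A^{1/2}e^{-wA}g(s)}^2\dd w=\int_0^\infty \norm{\psi(wA)g(s)}^2\frac{\dd w}{w}=\norm{g(s)}_{\psi,A}^2\le C^2\norm{g(s)}^2
\]
by the upper estimate in \eqref{eq:sqrexp}. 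Hence $s\mapsto F_s$ is a predictable $L^2(\R_+;X)$-valued integrand. Measurability is routine for step processes, since $F_s$ depends linearly and boundedly on $g(s)$. Define the continuous $L^2(\R_+;X)$-valued martingale
\[
 M_t:=\int_0^t F_s\dd\beta_s .
\]

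\emph{Step 2 (comparison).} For fixed $u>0$, the identity above together with linearity of the stochastic integral gives
\[
 \psi(uA)U_g(t)=\int_0^t u^{1/2}A^{1/2}e^{-(u+t-s)A}g(s)\dd\beta_s=u^{1/2}M_t(u+t).
\]
To justify this pointwise-in-$u$ evaluation, I would use that $g$ is a step process, so everything is a finite sum of explicit terms; alternatively one can pair with test functions. Then the lower estimate in \eqref{eq:sqrexp} yields
\[
 c^2\norm{U_g(t)}^2\le\int_0^\infty u\,\norm{M_t(u+t)}^2\frac{\dd u}{u}=\int_t^\infty\norm{M_t(\tau)}^2\dd\tau\le\norm{M_t}^2_{L^2(\R_+;X)} .
\]

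\emph{Step 3 (BDG).} Take the supremum over $t$ and apply \eqref{eq:BDG} in the Hilbert space $L^2(\R_+;X)$, which holds there with the same constant $B_p$. This gives
\[
 c\left(\E\sup_t\norm{U_g(t)}^p\right)^{1/p}\le B_p\Bigl\|\Bigl(\int_0^\infty\norm{F_s}^2\dd s\Bigr)^{1/2}\Bigr\|_{L^p(\Omega)}\le B_pC\norm{g}_{L^p(\Omega;L^2(\R_+;X))},
\]
which is the claimed bound $\cM_{p,\R_+}(S)\le B_p c^{-1}C$.

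\emph{Main obstacle.} The delicate point is Step 2: one must justify evaluating the $L^2(\R_+;X)$-valued stochastic integral pointwise and interchanging it with $\psi(uA)$. For step processes this is a finite-sum identity, so I expect it to be harmless. The other small points are injectivity of $A$, which is implicit in the sectorial framework, and the fact that $\omega(A)<\pi/2$ ensures $\psi\in H_0^\infty$ and the identity for $\psi(uA)S(r)$.
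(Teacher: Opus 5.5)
Your proposal is correct and is essentially the paper's proof. You lift $g(s)$ to the same $L^2(\R_+;X)$-valued integrand $\tau\mapsto \one_{s<\tau}A^{1/2}S(\tau-s)g(s)$, use $M_t(\tau)=A^{1/2}S(\tau-t)U_g(t)$ for $\tau>t$ with the lower square-function bound to get $c\norm{U_g(t)}\le\norm{M_t}_{L^2(\R_+;X)}$, and finish with the upper bound and \eqref{eq:BDG} in the enlarged Hilbert space; your factor $u^{1/2}$ in $\psi(uA)U_g(t)=u^{1/2}M_t(u+t)$ is just the paper's rewriting $\norm{x}_{\psi,A}^2=\int_0^\infty\norm{A^{1/2}S(r)x}^2\dd r$ made explicit.
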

Since the two-sided square-function estimate~\eqref{eq:sqrexp} is equivalent to boundedness of the $H^\infty$ calculus, Proposition \ref{prop:Maxsquare} can also be deduced from \cite[Theorem 2.2]{Sei10} or \cite[Theorem 1.1]{VeraarWeis2011} mentioned in the introduction. Below we give a more direct argument by `embedding' $U_g(t)$ inside a martingale which takes values in the enlarged space $L^2(\R_+;X)$.

\begin{proof}
    Let $g$ be a predictable step process, and set $\mathscr{X} = L^2(\R_+;X)$.
    By expanding the definition of $\norm{x}_{\psi,A}$ and using $e^{-tA} = S(t)$, we rewrite~\eqref{eq:sqrexp} into a more transparent form:
    \begin{align}
        \label{eq:sqrexpexplicit}
        c^2 \norm{x}^2 \leq \int_0^{\infty}\norm{A^{1/2}S(r)x}^2 \dd r \leq C^2 \norm{x}^2, \quad x \in X.
    \end{align}
    We now define the $\mathscr{X}$-valued predictable process
    \begin{align*}
        \Phi(s) \coloneq r \mapsto \one_{s < r}A^{1/2}S(r-s)g(s),
    \end{align*}
    as well as the $\mathscr{X}$-valued martingale
    \begin{align*}
        \mathcal{M}(t) \coloneq \int_0^t \Phi(s)\dd \beta_s,
    \end{align*}
    and observe that for $r > t$, the semigroup identity yields
    \begin{equation*}
        \mathcal{M}(t)(r) = \int_0^t 1_{s < r}A^{1/2}S(r-s)g(s) \dd \beta_s = A^{1/2}S(r-t)U_g(t).
    \end{equation*}
    From the lower estimate of~\eqref{eq:sqrexpexplicit}, we thus find for $t \in \R_+$
    \begin{align*}
        c^2 \norm{U_g(t)}^2 &\leq \int_0^{\infty} \norm{A^{1/2} S(r)U_g(t)}^2 \dd r
        = \int_t^{\infty} \norm{A^{1/2} S(r-t)U_g(t)}^2 \dd r 
        \\&\quad= \int_t^{\infty}\norm{\mathcal{M}(t)(r)}^2 \dd r \leq \norm{\mathcal{M}(t)}_{\mathscr{X}}^2.
    \end{align*}
    On the other hand, from the upper estimate of~\eqref{eq:sqrexpexplicit}, we find for $s \in \R_+$
    \begin{align*}
        \norm{\Phi(s)}_{\mathscr{X}}^2 = \int_s^{\infty}\norm{ A^{1/2}S(r-s)g(s)}^2 \dd r
        = \int_0^{\infty}\norm{ A^{1/2}S(r)g(s)}^2 \dd r \leq C^2 \norm{g(s)}^2.
    \end{align*}
    Thus, combining the two estimates above and using \eqref{eq:BDG} in between, we conclude that
    \begin{align*}
        \norm{U_g}_{L^p(\Omega;C(\R_+;X))} 
        &\leq c^{-1}
        \norm{\mathcal{M}}_{L^p(\Omega;C(\R_+;\mathscr{X}))} 
        \\ & \leq B_p c^{-1}
        \norm{\Phi}_{L^p(\Omega;L^2(\R_+;\mathscr{X}))} 
        \\ & \leq B_p c^{-1}C\norm{g}_{L^p(\Omega;L^2(\R_+;X))}. \qedhere
    \end{align*}
    
\end{proof}
The main result of this section provides a partial converse to the previous proposition, but with $\phi$ instead of $\psi$.
By contraposition, it will allow us to construct a semigroup $S$ for which the maximal estimate fails, forming the basis for our counterexample.
\begin{theorem}[Maximal estimate implies lower square function estimate]
\label{thm:necessary-square}
Suppose that $S$ is exponentially stable and that $\cM_{2,\R_+}(S) < \infty$.
Then the following lower square function estimate holds:
\begin{equation}
 \norm{x}\leq 2\, \cM_{2,\R_+}(S) \norm{x}_{\phi,A}
 \qquad x\in X.
 \label{eq:necessary-lower-square}
\end{equation}
\end{theorem}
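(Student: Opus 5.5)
The plan is to test the maximal estimate with a single, carefully chosen predictable integrand, built from the subordinated Poisson semigroup $P(r):=e^{-rA^{1/2}}$ evaluated along a Brownian motion run until a hitting time. Fix $x\in X$ and a level $a>0$. Put $\tau_a:=\inf\{s:\beta_s=a\}$ and $r_s:=a-\beta_s$, the distance to the level. The heuristic is Bochner subordination: since $\E e^{-\lambda\tau_a}=e^{-a\sqrt{2\lambda}}$, one has $\E S(\tau_a)=P(\sqrt2\,a)$. Itô's formula applied to functions of $r_s$ should therefore turn $P$ into a stochastic integral against $S$.

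\textbf{Step 1: the identity.} By Itô's formula, $\frac12\partial_r^2 P(\sqrt2 r)=AP(\sqrt2 r)$. This drift is exactly cancelled by $\frac{\dd}{\dd s}S(s)=-AS(s)$, so
\[
 S(s)P(\sqrt2\, r_s)x-P(\sqrt2\, a)x=\int_0^s S(u)\,\sqrt2 A^{1/2}P(\sqrt2\, r_u)x\,\dd\beta_u
\]
holds up to $\tau_a$, where $r_{\tau_a}=0$. This is not yet a stochastic convolution, because the integrand involves $S(u)$ rather than $S(t-u)$. I would fix this by working in convolution form directly. The aim is an identity of the type
\[
 x\approx U_g(t)+(\text{error}),\qquad g(u):=\1_{u<\tau_a}\sqrt2 A^{1/2}P(\sqrt2\, r_u)y,
\]
evaluated at a suitable random time, or a dual version in which one pairs $U_g$ with $S(\cdot)^*$ applied to a test vector and takes expectations. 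The point of such a version is that the leftover terms are controlled by $\E\norm{S(\tau_a)}\to0$ as $a\to\infty$, using exponential stability, and by $\norm{P(\sqrt2 a)x}\to0$. Finding the correct pairing, that is, choosing the evaluation time and the test vector so that the drift from $\frac{\dd}{\dd s}S(t-s)=+AS(t-s)$ cancels, is the step I expect to be the main obstacle. The forward-time computation above shows the cancellation works for $S(s)$. I would therefore first try to transfer it through duality: apply the estimate $\cM_{2,\R_+}(S)$ to an integrand whose convolution at time $\tau_a$, paired against $x$, reproduces $\norm{x}^2$ up to $o(1)$.

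\textbf{Step 2: energy of the integrand.} The energy is computed with the Green function of Brownian motion killed at level $a$, namely $\E\int_0^{\tau_a}f(r_s)\,\dd s=\int_0^\infty 2\min(r,a)f(r)\,\dd r$. This gives
\[
 \E\norm{g}_{L^2(\R_+;X)}^2\le\int_0^\infty 4r\,\norm{A^{1/2}P(\sqrt2\, r)y}^2\,\dd r.
\]
Substituting $t=\rho^2$ shows $\norm{y}_{\phi,A}^2=\int_0^\infty 2\rho\,\norm{A^{1/2}e^{-\rho A^{1/2}}y}^2\,\dd\rho$. After rescaling, the right-hand side above is therefore a fixed multiple of $\norm{y}_{\phi,A}^2$, and tracking the constants should produce the factor $2$ in \eqref{eq:necessary-lower-square}.

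\textbf{Step 3: conclusion.} Since $\sup_t\norm{U_g(t)}$ dominates $\norm{U_g(\tau)}$ at any random time $\tau$, the definition of $\cM_{2,\R_+}(S)$ gives $\E\norm{U_g(\tau)}^2\le\cM_{2,\R_+}(S)^2\,\E\norm{g}^2_{L^2}$. The identity from Step~1 bounds $\norm{x}$ from below by $\E\norm{U_g(\tau)}$, up to errors that vanish as $a\to\infty$. Letting $a\to\infty$ and combining with Step~2 yields $\norm{x}\le 2\,\cM_{2,\R_+}(S)\norm{x}_{\phi,A}$. If $\norm{x}_{\phi,A}=\infty$ there is nothing to prove. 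Otherwise one may assume $x$ lies in a suitable core such as $D(A)\cap R(A)$, and pass to general $x$ by a limiting argument.
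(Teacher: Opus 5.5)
\textbf{There is a genuine gap, and it is Step~1.} You flag it yourself as the main obstacle. It is not a technicality: it is the whole content of the proof, and the route you sketch does not lead to it.

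Write $C=(2A)^{1/2}$, so that your $P(\sqrt2\,r)$ is $e^{-rC}$. Your forward identity
\[
S(s)e^{-r_sC}x-e^{-aC}x=\int_0^sS(u)Ce^{-r_uC}x\,\dd\beta_u
\]
is a correct pathwise form of Bochner subordination. However, its integrand carries $S(u)$, not $S(\tau_a-u)$, so it is not a stochastic convolution. Nothing in your sketch (pairing with $S(\cdot)^*$, choosing an evaluation time) converts it into one, since moving $S(u)$ to $S(\tau_a-u)$ would require running the semigroup backwards. The error terms you anticipate, $\E\norm{S(\tau_a)}$ and $\norm{P(\sqrt2\,a)x}$, therefore play no role. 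Exponential stability is needed only to define $C$ and for integrability. Two further points:
\begin{enumerate}[label=\textup{(\roman*)}]
\item A relation $x\approx U_g(\tau_a)$ can hold only weakly in expectation, not pathwise.
\item Step~3 needs the upper bound $\norm{x}\le\E\norm{U_g(\tau_a)}+o(1)$, not the lower bound you state.
\end{enumerate}

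\textbf{The missing idea} is to put the Poisson semigroup on the \emph{test vector} and let it absorb the drift of $U_g$. Take $y\in\cD(A^*)$ and $g$ with values in $\cD(A)$; general $g$ then follows by density, using the maximal estimate. It\^o's formula gives
\[
\dd\,e^{-r_tC^*}y=A^*e^{-r_tC^*}y\,\dd t+C^*e^{-r_tC^*}y\,\dd\beta_t ,
\]
while $\dd U_g=-AU_g\,\dd t+g\,\dd\beta_t$. In $\dd\langle U_g(t),e^{-r_tC^*}y\rangle$ the drifts $-\langle AU_g,\cdot\rangle$ and $\langle U_g,A^*\cdot\rangle$ cancel. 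Only the covariation term $\langle g,C^*e^{-r_tC^*}y\rangle\,\dd t$ survives. Since $U_g(0)=0$ and $e^{-r_{\tau_a}C^*}=I$, optional stopping yields, for every predictable $g$ and with no boundary error,
\[
\E\langle U_g(\tau_a),y\rangle=\E\int_0^{\tau_a}\langle g(s),C^*e^{-r_sC^*}y\rangle\,\dd s .
\]
The uniform integrability needed for optional stopping is itself supplied by $\cM_{2,\R_+}(S)<\infty$ together with your Green-function bound. This is the paper's Lemma~\ref{lem:stoppingidentity}; the paper uses $a+\beta_t$, hence a sign change.

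\textbf{With this identity your plan closes.} For $x\in\cD(A)$ take $g=2\,\1_{[0,\tau_a)}Ce^{-r_\cdot C}x$, i.e.\ twice your integrand, built from $x$ rather than $y$.
\begin{enumerate}[label=\textup{(\roman*)}]
\item The right-hand side equals $4\int_0^\infty(a\wedge r)\langle C^2e^{-2rC}x,y\rangle\,\dd r$. This tends to $\langle x,y\rangle$ by the reproducing formula $\int_0^\infty rC^2e^{-2rC}x\,\dd r=x/4$.
\item Your Step~2 gives $\E\norm{g}_{L^2(\R_+;X)}^2\le4\norm{x}_{\phi,A}^2$.
\item Hence $\abs{\langle x,y\rangle}\le2\,\cM_{2,\R_+}(S)\norm{x}_{\phi,A}\norm{y}$.
\end{enumerate}

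\textbf{Comparison with the paper.} The paper instead keeps $g$ arbitrary and dualises in $L^2_{\cP}$ to obtain the adjoint upper bound $\norm{y}_{\phi,A^*}\le\cM_{2,\R_+}(S)\norm{y}$. It then concludes by duality of square functions. In both versions the factor $2$ comes from $\int_0^\infty\phi(t)^2\,\frac{\dd t}{t}=\frac12$.
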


Before we commence the proof, we introduce several auxiliary objects which play a key role.
Since $S$ is exponentially stable, $A$ is injective and sectorial with $\omega(A) \leq \pi / 2$.
Hence, we may define $A^{1/2}$ via sectorial functional calculus as described in the preliminaries, and set
\[
 C\coloneq (2A)^{1/2}, \qquad P_r\coloneq e^{-rC},\quad r\geq 0.
\]
Thus \((P_r)_{r \geq 0}\) is the subordinated Poisson semigroup, and it inherits the exponential stability of $S$.
Secondly, for $a > 0$, we define $b^a_t \coloneq a + \beta_t$ and define the (a.s. finite) stopping time
\[
 \tau^a=\inf\{t\geq0:b^a_t = 0\}.
\]
The main trick in the proof is to apply It\^o's formula to the process $t \mapsto \langle U_g(t), P^*_{b^a_t}y\rangle$ and evaluate its expectation at time $\tau^a$ using optional stopping.
Exploiting a cancellation of the drift then results in the remarkable identity~\eqref{eq:stoppingidentity} ahead.
From this identity and the maximal estimate,~\eqref{eq:necessary-lower-square} follows by a duality argument.

We first record an occupation-time identity for the killed Brownian motion $b^a_t$.
\begin{lemma}[Green's formula for killed Brownian motion]
\label{lem:green-killed-bm}
Let $f : \R_+ \to \R$ be nonnegative and measurable. Then we have the identity
\begin{equation}
\label{eq:green-killed-bm}
    \E\int_0^{\tau^a}f(b^a_s)\dd s
    =
    2\int_0^\infty (a\wedge r)f(r)\dd r.
\end{equation}

\end{lemma}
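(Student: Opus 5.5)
We will prove \eqref{eq:green-killed-bm} with the standard Green-function argument for Brownian motion killed at $0$. The approach is to produce, for suitable $f$, a function $u$ solving $\tfrac12 u''=-f$ on $(0,\infty)$ with $u(0)=0$. Applying It\^o's formula to $u(b^a_t)$ and stopping at $\tau^a$ then identifies $\E\int_0^{\tau^a}f(b^a_s)\dd s$ with $u(a)$. The explicit solution is
\[
u(x)=2\int_0^\infty (x\wedge r)f(r)\dd r,
\]
whose second derivative is $-2f$ (in the distributional sense) and which vanishes at $0$. This matches the right-hand side of \eqref{eq:green-killed-bm}.

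The steps, in order, are as follows.

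\emph{Reduction.} By monotone convergence it suffices to treat $f$ continuous, nonnegative and compactly supported in $(0,\infty)$. Indeed, both sides are monotone in $f$, and such functions approximate indicators of open intervals from below. A monotone class argument then extends the identity to all nonnegative measurable $f$, working with the finite measures on $(0,N)$ and letting $N\to\infty$.

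\emph{It\^o's formula.} For $f$ supported in $[\delta,R]$, the function $u$ above is $C^2$ and bounded. It is linear on $[0,\delta]$, constant ($=2\int rf$) on $[R,\infty)$, and satisfies $u''=-2f$. It\^o's formula gives
\[
u(b^a_{t\wedge\tau^a})=u(a)+\int_0^{t\wedge\tau^a}u'(b^a_s)\dd\beta_s-\int_0^{t\wedge\tau^a}f(b^a_s)\dd s .
\]
Since $u'$ is bounded, the stochastic integral is a martingale and has zero expectation. Hence
\[
\E\int_0^{t\wedge\tau^a}f(b^a_s)\dd s=u(a)-\E u(b^a_{t\wedge\tau^a}).
\]

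\emph{Passage $t\to\infty$.} The expected main obstacle is this limit, because $\tau^a$ is a.s. finite but not integrable. On the left we use monotone convergence. On the right, $u$ is bounded and continuous and $b^a_{t\wedge\tau^a}\to b^a_{\tau^a}=0$ a.s., so dominated convergence gives $\E u(b^a_{t\wedge\tau^a})\to u(0)=0$. Boundedness of $u$ is exactly why we restrict to compactly supported $f$ first; the argument would fail for, e.g., $u(x)=x$ with $f\equiv$ const. This yields $\E\int_0^{\tau^a}f(b^a_s)\dd s=u(a)$, which is \eqref{eq:green-killed-bm} for such $f$. The reduction step then gives the general case.
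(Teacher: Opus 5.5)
Your proposal is correct and follows essentially the same route as the paper: reduce to continuous compactly supported $f$, apply It\^o's formula to the bounded $C^2$ potential $u(x)=2\int_0^\infty (x\wedge r)f(r)\dd r$ stopped at $\tau^a$, and pass to $t\to\infty$ using monotone convergence on the left and dominated convergence (with $u(0)=0$) on the right. The differences are cosmetic. You take the support in $(0,\infty)$, which makes $u$ linear near $0$ and avoids any $C^2$ issue at the boundary. You extend to general $f$ by a $\pi$--$\lambda$ argument, whereas the paper cites a uniqueness theorem for Borel measures that agree on continuous compactly supported test functions.
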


\begin{proof}
It suffices to prove \eqref{eq:green-killed-bm} for nonnegative
continuous compactly supported functions $f:\R\to[0,\infty)$.
Indeed, evaluating the two sides at indicators defines two Borel
measures on $\R$. Once the identity is established for such test
functions,  \cite[Theorem~2.25]{Kallenberg2021} implies that the two measures coincide. The identity for arbitrary nonnegative Borel measurable
functions follows by approximation with simple functions and
monotone convergence.

For $f$ as above, we define $F(a)$ as the right-hand side of~\eqref{eq:green-killed-bm}.
Then, $F$ is bounded with two bounded derivatives, and satisfies
\[
    F(0)=0,
    \qquad
    F'(s)=2\int_s^\infty f(r)\dd r,
    \qquad
    F''(s)=-2f(s), \qquad s \geq 0.
\]
Applying It\^o's formula to $F(b^a_{t \wedge \tau^a})$ then gives
\begin{equation*}
F(b^a_{t \wedge \tau^a}) = F(a) + \int_0^{t \wedge \tau^a} F'(b^a_s) \dd \beta_s - \int_0^{t \wedge \tau^a}f(b^a_s) \dd s, \qquad t \geq 0,
\end{equation*}
so that taking expectations and rearranging the terms results in
\begin{equation*}
    \E \, F(b^a_{t \wedge \tau^a}) + \E \int_0^{t \wedge \tau^a} f(b^a_s) \dd s = F(a) , \qquad t \geq 0.
\end{equation*}
Taking $t \to \infty$ using dominated convergence, the first term on the left-hand side vanishes and we recover the desired identity.
\end{proof}

We additionally need the following two lemmas, which provide useful identities for the processes $P_{b^a_t}x$ and $P_{b^a_t}^*y$.
\begin{lemma}
    \label{lem:integrable}
Let $a>0$ and $x\in \cD(A)$. Then 
    \begin{equation}
        \label{eq:kernelidentity}
        \E \int_0^{\tau^a} \norm{C P_{b^a_s}x}^2 \dd s = 2 \int_0^{\infty} (a \wedge r) \norm{C P_{r}x}^2 \dd r < \infty.
    \end{equation}
    Moreover, we have the identity
    \begin{equation}
        \label{eq:limitidentity}
        \lim_{a\to \infty}\E \int_0^{\tau^a} \norm{C P_{b^a_s} x}^2 \dd s = 
        2 \int_0^{\infty} r \norm{C P_{r}x}^2 \dd r 
        = \norm{x}_{\phi,A}^2.
    \end{equation}
\end{lemma}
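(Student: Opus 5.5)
The plan is to derive \eqref{eq:kernelidentity} directly from Lemma~\ref{lem:green-killed-bm}, then obtain \eqref{eq:limitidentity} by monotone convergence and a change of variables in the square function.

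\emph{Step 1: the occupation-time identity.} Apply Lemma~\ref{lem:green-killed-bm} to the function $f(r)\coloneq\norm{CP_rx}^2$. This $f$ is nonnegative and continuous in $r>0$, since $P_r$ is a $C_0$-semigroup and $CP_r$ is bounded for $r>0$ by analyticity of the Poisson semigroup; hence it is measurable. The lemma immediately gives the equality in \eqref{eq:kernelidentity}.

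For finiteness, the assumption $x\in\cD(A)\subseteq\cD(C)$ gives $CP_rx=P_rCx$. Exponential stability of $P$ yields $\norm{P_r}\le Me^{-\omega r}$ with $\omega>0$. Therefore
\[
\int_0^\infty (a\wedge r)\norm{CP_rx}^2\dd r\le \int_0^\infty r M^2e^{-2\omega r}\dd r\,\norm{Cx}^2<\infty.
\]
Exponential stability of $P$ is inherited from $S$, as stated before the lemma; I take this as given.

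\emph{Step 2: the limit $a\to\infty$.} Since $a\wedge r\uparrow r$ as $a\to\infty$, monotone convergence gives
\[
\lim_{a\to\infty}2\int_0^\infty (a\wedge r)\norm{CP_rx}^2\dd r=2\int_0^\infty r\norm{CP_rx}^2\dd r,
\]
which is finite by Step~1. It remains to identify this quantity with $\norm{x}_{\phi,A}^2$. Since $C=(2A)^{1/2}$, we have
\[
\phi(tA)=(tA)^{1/2}e^{-(tA)^{1/2}},
\]
which by the composition rule of the sectorial calculus equals $(t/2)^{1/2}Ce^{-(t/2)^{1/2}C}$. Substituting $r=(t/2)^{1/2}$, i.e.\ $t=2r^2$ and $\dd t/t=2\,\dd r/r$, gives
\[
\norm{x}_{\phi,A}^2=\int_0^\infty r^2\norm{CP_rx}^2\,\frac{2\,\dd r}{r}=2\int_0^\infty r\norm{CP_rx}^2\dd r.
\]

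\emph{Main obstacle.} The main point is justifying the functional-calculus identity $\phi(tA)x=(t/2)^{1/2}CP_{(t/2)^{1/2}}x$, namely that $e^{-rC}$ defined via the calculus agrees with the generated semigroup, and that $(sA)^{1/2}=s^{1/2}A^{1/2}$. These are standard composition and scaling rules of the sectorial calculus (cf.\ \cite{Haase2006}); since $x\in\cD(A)$, no domain issues arise.
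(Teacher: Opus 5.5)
Your proposal is correct and follows the paper's proof step for step. The equality in \eqref{eq:kernelidentity} is Lemma~\ref{lem:green-killed-bm} applied to $f(r)=\norm{CP_rx}^2$, finiteness comes from $x\in\cD(A)$ together with exponential stability of $P$, and \eqref{eq:limitidentity} follows from monotone convergence in $a$ and the substitution $t=2r^2$ (the paper's $\sqrt{t}=\sqrt{2}\,r$), with you only spelling out details the paper leaves implicit, such as $CP_rx=P_rCx$ and the explicit exponential bound.
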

\begin{remark}
    \label{rem:integrable}
    The lemma also holds true with $A$, $C$, $P$ replaced by $A^*$, $C^*$, $P^*$.
\end{remark}
\begin{proof}
    The first identity of~\eqref{eq:kernelidentity} is a direct application of Lemma~\ref{lem:green-killed-bm}.
    The finiteness follows since $x \in \cD(A)$ and $P_r$ is exponentially stable in the operator norm. 
The first identity of~\eqref{eq:limitidentity} follows by taking $a \to \infty$ in~\eqref{eq:kernelidentity} using monotone convergence, and the second identity holds by expanding the definition of $C$ and $P_{r}$ and making the substitution $\sqrt{t} = \sqrt{2}r$, resulting in
\begin{equation*}
    2 \int_0^{\infty} r \norm{(2A)^{1/2} e^{-r(2A)^{1/2}}x}^2 \dd r = \int_0^{\infty} \norm{(tA)^{1/2}e^{-(tA)^{1/2}}x}^2 \frac{\dd t}{t} = \norm{x}_{\phi,A}^2. \qedhere
\end{equation*}
\end{proof}

\begin{lemma}
    For $y \in \cD(A^*)$, it holds that
    \begin{equation}
        \label{eq:itoPbt}
        \dd P_{b^a_t}^*y = A^* P_{b^a_t}^* y \dd t - C^* P^*_{b^a_t}y \dd \beta_t
    \end{equation}
    in the strong sense, up to time $\tau^a$.
\end{lemma}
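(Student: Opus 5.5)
This is Itô's formula for the $X$-valued function $r\mapsto P_r^*y$ evaluated along the semimartingale $b^a_t=a+\beta_t$. The plan is to apply Itô's formula to $G(r):=P^*_r y$ on $r\in[0,\infty)$, up to the stopping time $\tau^a$, so that $b^a$ stays in $[0,\infty)$ and never leaves the domain of $G$.

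\textbf{Step 1: derivatives of $G$.} Since $P^*$ is the $C_0$-semigroup generated by $-C^*$, for $y\in\cD(A^*)\subseteq \cD(C^*)$ we have
\[
G'(r)=-C^*P^*_r y,\qquad G''(r)=(C^*)^2P^*_r y=2A^*P^*_r y,
\]
using $C^{*2}=((2A)^{1/2})^{*2}=(2A^*)^{1/2\,2}=2A^*$. Here we use that the adjoint of the sectorial square root is the square root of the adjoint, and that $(2A^*)^{1/2}$ squares to $2A^*$ on $\cD(A^*)$. Both derivatives are continuous in $r$ on $[0,\infty)$, because $C^*y$ and $A^*y$ lie in $X$ and $P^*$ is strongly continuous. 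Hence $G\in C^2([0,\infty);X)$, with bounded derivatives by exponential stability.

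\textbf{Step 2: Itô's formula.} Itô's formula for a $C^2$ Banach-valued function of a real semimartingale, applied to the stopped process $b^a_{t\wedge\tau^a}\ge 0$, gives
\[
P^*_{b^a_{t\wedge\tau^a}}y = P_a^*y - \int_0^{t\wedge\tau^a} C^*P^*_{b^a_s}y\,\dd\beta_s + \tfrac12\int_0^{t\wedge\tau^a} 2A^*P^*_{b^a_s}y\,\dd s .
\]
This is exactly \eqref{eq:itoPbt}, with the integrals taken in the strong (Bochner and Itô) sense. If one prefers to avoid vector-valued Itô theory, one can pair with an arbitrary $x\in X$ and apply the scalar Itô formula to $r\mapsto\langle x,P_r^*y\rangle$, which is $C^2$ with the derivatives above. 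The strong identity then follows because both sides are $X$-valued continuous processes whose weak versions agree on a countable dense set of $x$.

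\textbf{Main obstacle.} The only delicate point is the boundary $r=0$: $G$ is defined only on $[0,\infty)$. This is handled by stopping at $\tau^a$, which keeps the argument in $[0,\infty)$. Alternatively, one can extend $G$ to a $C^2$ function on $\R$ by a second-order Taylor reflection, since the one-sided derivatives at $0$ exist and are continuous. A second check is that $\cD(A^*)\subseteq\cD(C^*)$ and $(C^*)^2=2A^*$ there; this is standard from the sectorial functional calculus, since $A^*$ is sectorial with $(A^{1/2})^*=(A^*)^{1/2}$.
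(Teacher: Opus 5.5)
Your proposal is correct and takes the paper's route: the paper's proof is the one-line instruction to apply the scalar Itô formula to $f(b^a_{t\wedge\tau^a})$ with $f(s)=\langle x,P^*_s y\rangle$, which is your ``pair with $x$'' variant. Your extra details fill in what that line leaves implicit: the identity $(C^*)^2=2A^*$ on $\cD(A^*)$, continuity of $G''(r)=2P^*_rA^*y$, the boundary at $0$ handled by a $C^2$ extension, and the passage from the weak identity to the strong one via a countable dense set.
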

\begin{proof}
    Apply It\^o's formula to $f(b^a_{t \wedge \tau^a})$ with $f(s) = \langle x, P^*_{s} y\rangle = \langle x, e^{-s C^*}y\rangle$ for $x \in X$. 
\end{proof}
The core of the proof is the following identity, which is obtained by an optional stopping argument and the aforementioned cancellation of the drift.

\begin{lemma}
    \label{lem:stoppingidentity}
Suppose that $\cM_{2,\R_+}(S) < \infty$. 
    For every $g \in L^2_{\cP}(\Omega;L^2(\R_+;X))$ and $y \in \cD(A^*)$ we have the identity
    \begin{equation}
        \label{eq:stoppingidentity}
        \E \langle U_g (\tau^a), y\rangle = -\E\int_0^{\tau^a} \langle g(s), C^* P_{b^a_s}^*y\rangle \dd s.
    \end{equation}
\end{lemma}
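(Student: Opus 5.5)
We apply It\^o's formula to the real-valued (or complex) process $Z_t \coloneq \langle U_g(t), P^*_{b^a_t}y\rangle$ on $[0,\tau^a]$, take expectations, and use optional stopping. The plan is to first do the computation for nice $g$ and nice $y$, then remove the regularity assumptions.

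\textbf{Step 1: reduction.} Fix $y\in\cD(A^*)$ and, in a first step, take $g$ a predictable step process with values in $\cD(A)$ and supported in a bounded time interval $[0,N]$. Then $U_g$ is a strong solution:
\[
\dd U_g(t) = -AU_g(t)\dd t + g(t)\dd\beta_t .
\]
Note that $U_g(t)\in\cD(A)$, since $S$ commutes with $A$ on $\cD(A)$.

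\textbf{Step 2: It\^o's formula.} Combine this with \eqref{eq:itoPbt} via the product rule for the pairing $\langle U,V\rangle$. With $V_t=P^*_{b^a_t}y$ we get
\[
\dd Z_t = \langle -AU_g,V_t\rangle\dd t+\langle U_g,A^*V_t\rangle\dd t + \langle g,V_t\rangle\dd\beta_t - \langle U_g, C^*V_t\rangle\dd\beta_t - \langle g(t),C^*V_t\rangle\dd t .
\]
The last term is the cross-variation term. The first two drift terms cancel, which is the advertised cancellation. Since $Z_0=0$, integrating up to $t\wedge\tau^a$ and taking expectations gives
\[
\E Z_{t\wedge\tau^a} = -\E\int_0^{t\wedge\tau^a}\langle g(s),C^*P^*_{b^a_s}y\rangle\dd s,
\]
provided the two stochastic integrals are true martingales. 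For the $\langle g,V\rangle$ integral this holds because $\norm{V_t}\le K\norm{y}$, as $P$ is bounded. For the other one, $\norm{C^*V_t}\le\norm{C^*y}\sup_r\norm{P_r}$ since $y\in\cD(A^*)\subset\cD(C^*)$. Combined with $\E\sup_{t}\norm{U_g(t)}^2<\infty$, which is exactly where $\cM_{2,\R_+}(S)<\infty$ enters, the quadratic variation up to time $t$ has finite expectation.

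\textbf{Step 3: letting $t\to\infty$.} On the left, $Z_{t\wedge\tau^a}\to Z_{\tau^a}=\langle U_g(\tau^a),y\rangle$ a.s., because $\tau^a<\infty$ a.s. and $P_0=I$. The limit is dominated by $K\norm{y}\sup_t\norm{U_g(t)}\in L^1$, again using the maximal estimate and continuity of $U_g$. On the right, dominated convergence applies once we show
\[
\E\int_0^{\tau^a}\abs{\langle g,C^*P^*_{b^a_s}y\rangle}\dd s<\infty .
\]
By Cauchy--Schwarz this is bounded by
\[
\norm{g}_{L^2(\Omega;L^2)}\Big(\E\int_0^{\tau^a}\norm{C^*P^*_{b^a_s}y}^2\dd s\Big)^{1/2},
\]
and the second factor is finite by Lemma~\ref{lem:integrable} together with Remark~\ref{rem:integrable}. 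This proves \eqref{eq:stoppingidentity} for nice $g$.

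\textbf{Step 4: density.} For general $g\in L^2_{\cP}(\Omega;L^2(\R_+;X))$, approximate by nice $g_n$ in this norm. Both sides of \eqref{eq:stoppingidentity} are continuous in $g$. The left side is bounded by $\norm{y}\,\E\sup_t\norm{U_g(t)}\le \norm{y}\,\cM_{2,\R_+}(S)\norm{g}$. The right side is continuous by the Cauchy--Schwarz bound of Step 3. Hence the identity passes to the limit.

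\textbf{Main obstacle.} The delicate point is rigorously justifying It\^o's product formula and the martingale property of the local martingale part. $U_g$ is only a strong solution for regular $g$, and $V_t$ involves unbounded operators evaluated at the random time $b^a_t$. If this is awkward to do directly, first prove the identity with $y$ replaced by a Yosida-type regularisation and $U_g$ by $nR(n,-A)U_g$, where everything is bounded, and then let $n\to\infty$.
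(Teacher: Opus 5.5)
Your proposal is correct and follows essentially the same route as the paper. You reduce by density, using the maximal estimate, to regular $\cD(A)$-valued $g$, apply It\^o's product formula to $\langle U_g(t\wedge\tau^a),P^*_{b^a_{t\wedge\tau^a}}y\rangle$ with the drift cancellation, and let $t\to\infty$ using $\E\sup_{t\geq0}\norm{U_g(t)}<\infty$ and Lemma~\ref{lem:integrable}; the only differences are cosmetic (bounded-support step processes, and dominated convergence on both sides instead of uniform integrability plus optional stopping).
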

\begin{remark}
    The expectation on the right-hand side is well-defined by Cauchy--Schwarz, \eqref{eq:kernelidentity}, and Remark~\ref{rem:integrable}.
\end{remark}

\begin{proof}
    By density and the maximal estimate it suffices to prove the identity for $g \in L^2_{\cP}(\Omega;L^2(\R_+;\cD(A)))$.
    In this case, $U_g$ has a continuous version which satisfies
    \begin{equation*}
        \dd U_g(t) =-A U_g(t) \dd t + g \dd\beta_t
    \end{equation*}
    in the strong sense.
    Combining this with~\eqref{eq:itoPbt}, it follows by applying It\^o's formula to $f(U_g(t \wedge \tau^a), P^*_{b^a_{t \wedge \tau^a}}y)$ with $f(u,v) = \langle u, v\rangle$  that
    \begin{equation*}
    \begin{aligned}
        \dd \langle U_g(t), P_{b^a_t}^*y\rangle = -&\langle A U_g, P_{b^a_t}^* y\rangle \dd t + \langle g, P_{b^a_t}^* y\rangle \dd \beta_t \\
        +& \langle U_g, A^* P^*_{b^a_t}y\rangle \dd t - \langle U_g, C^* P^*_{b^a_t}y\rangle \dd \beta_t \\
        -&\langle g, C^* P^*_{b^a_t}y \rangle \dd t,
    \end{aligned}
    \end{equation*}
    up to time $\tau^a$.
    Observing the cancellation between the drifts, we conclude from the identity above that
    \begin{equation*}
        \langle U_g(t \wedge \tau^a), P_{b^a_{t\wedge \tau^a}}^*y\rangle + \int_0^{t \wedge \tau^a} \langle g(s), C^* P^*_{b^a_s}y \rangle \dd s = 
        \int_0^{t \wedge \tau^a} \langle g(s), P_{b^a_s}^* y\rangle - \langle U_g(s), C^* P^*_{b^a_s}y\rangle \dd \beta_s.
    \end{equation*}
    From the maximal estimate~\eqref{eq:maximal-inequality} and Remark~\ref{rem:integrable} applied to the left-hand side of this identity, it follows that the martingale on the right-hand side is uniformly integrable. Thus, we may take the expectation, use optional stopping, and let $t\to \infty$ to conclude~\eqref{eq:stoppingidentity}.
\end{proof}
We now deduce the square function estimate~\eqref{eq:necessary-lower-square} from the maximal estimate~\eqref{eq:maximal-inequality} using~\eqref{eq:stoppingidentity} and a duality argument.
This is the only point where we will use the maximal estimate in a quantitative rather than qualitative way.
\begin{proof}[Proof of Theorem~\ref{thm:necessary-square}]
    Fix $a > 0$, $y \in \cD(A^*)$ and $g \in L^2_{\cP}(\Omega;L^2(\R_+;X))$.
    Taking the squared modulus of~\eqref{eq:stoppingidentity}, and applying Jensen's inequality and the maximal estimate to the left-hand side, we find:
    \begin{equation*}
        \abs{\E \int_0^{\tau^a} \langle g(s),  C^* P_{b^a_s}^*y \rangle \dd s}^2 \leq \cM_{2,\R_+}^2(S) \norm{y}^2 \E \int_0^{\tau^a} \norm{g(s)}^2 \dd s.
    \end{equation*}
    By duality in $L^2_{\cP}(\R_+ \times \Omega;X)$, we then obtain
    \begin{equation*}
        \E \int_0^{\tau^a} \norm{C^* P_{b^a_s}^* y}^2 \dd s \leq \cM_{2,\R_+}^2(S) \norm{y}^2.
    \end{equation*}
    Taking $a \to \infty$ using~\eqref{eq:limitidentity} then gives the adjoint upper square function estimate:
    \begin{equation*}
        \norm{y}_{\phi,A^*}^2 \leq \cM_{2,\R_+}^2(S) \norm{y}^2.
    \end{equation*}
    Since $y \in \cD(A^*)$ was arbitrary, the above extends by density to all $y \in X^*$.
    The desired lower square function estimate~\eqref{eq:necessary-lower-square} then follows from a standard duality argument for square functions (see \cite[p. 196]{Haase2006}). The constant is $2$ due to $\int_0^\infty \phi(t)^2 \frac{\dd t}{t} =\frac12$. 
\end{proof}

\begin{remark}[Maximal estimates for \(A\) and \(A^*\)]
\label{rem:maximal-both-Hinfty}
In case that $\cM_{2,\R_+}(S) < \infty$ and $\cM_{2,\R_+}(S^*) < \infty$ both hold true, the proof of Theorem~\ref{thm:necessary-square} and the fact that $A^{**} = A$ give that $\norm{x}_{\phi,A} \eqsim \|x\|$. By the previously mentioned theorem of McIntosh this implies the boundedness of the $H^\infty$-calculus. In the case $\omega(A)<\pi/2$, it follows from \eqref{eq:psiphi} that \eqref{eq:sqrexp} holds as well. 
\end{remark}

\section{Schauder basis}
\label{sec:schauder}
In this section we exhibit an operator which fails a square function estimate of the form~\eqref{eq:necessary-lower-square}.
This is done by standard arguments involving conditional bases on Hilbert spaces. Similar constructions can be found in \cite{BaillonClement1991} (see \cite{Fackler2015,Haase2006,HNVW2017} for historical details).
The reader is referred to \cite[Definition 1.1.1]{AlbiacKalton-Book} and \cite{Singer1970} for details.

Throughout this section, we fix $0 < \alpha < 1$ and define the weight $w(\theta) = \abs{\theta}^{-\alpha}$.
    Let $(f_n)_{n \geq 1}$ be the trigonometric system on the torus $\mathbb{T} = (-1/2, 1/2]$ with the symmetric ordering $0, -1, 1, -2, 2$, and so on.
    We define $X \coloneq L^2(\mathbb{T},w)$, and simply write $\norm{\cdot}$ for the corresponding norm.

We observe the following classical fact, see e.g. \cite[Example 10.2.32]{HNVW2017} or \cite[Proposition~2.3]{Nielsen2009}.
\begin{lemma}
    The sequence $(f_n)_{n \geq 1}$ is a Schauder basis of $X$.
\end{lemma}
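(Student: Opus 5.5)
The plan is to reduce the claim to the boundedness of the Riesz projection (equivalently, the conjugate function) on the weighted space $L^2(\mathbb{T},w)$, which holds because $w(\theta)=\abs{\theta}^{-\alpha}$ with $0<\alpha<1$ is a Muckenhoupt $A_2$ weight. By the standard characterisation of Schauder bases (see \cite[Definition 1.1.1]{AlbiacKalton-Book}), it suffices to check three things: the $f_n$ lie in $X$; their span is dense in $X$; and the partial sum projections $S_N x = \sum_{n\le N} \langle x, f_n\rangle_{L^2(\mathbb{T})} f_n$, defined first on trigonometric polynomials, are uniformly bounded on $X$. Biorthogonality is automatic, with coefficient functionals $x\mapsto \int_{\mathbb{T}} x\,\overline{f_n}\dd\theta$.

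First, $w$ is integrable since $\alpha<1$, so $f_n\in X$. The functionals $x\mapsto\int x\overline{f_n}$ are bounded on $X$ because $w^{-1}=\abs{\theta}^{\alpha}$ is bounded, so $\int \abs{x}\leq \norm{x}\,(\int w^{-1})^{1/2}$. Density of trigonometric polynomials in $X$ follows from two approximations. Continuous functions are dense in $L^2(w)$, since $w\dd\theta$ is a finite Radon measure. A continuous function on $\mathbb{T}$ is then a uniform limit of Fej\'er means, which are trigonometric polynomials, and uniform convergence implies convergence in $L^2(w)$ because $w\in L^1$.

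The main step is the uniform bound for the partial sums. With the symmetric ordering, the partial sums along the subsequence $N=2m+1$ are the symmetric Dirichlet partial sums $D_m x=\sum_{\abs{k}\le m}\hat x(k)e_k$. The intermediate partial sums differ from these by a single term $\hat x(k)e_k$, which is bounded on $X$ by the previous paragraph. Writing $D_m = M_{-m}\,\mathcal{P}\,M_{m} - M_{m+1}\,\mathcal{P}\,M_{-m-1}$, where $M_j$ is multiplication by $e_j$ (an isometry on $X$) and $\mathcal{P}$ is the Riesz projection onto nonnegative frequencies, reduces everything to boundedness of $\mathcal{P}=\tfrac12(I+iH)+\tfrac12 \hat{x}(0)$ on $L^2(w)$. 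Here $H$ is the conjugate function operator. The Hunt--Muckenhoupt--Wheeden theorem gives boundedness of $H$ on $L^2(w)$ for $w\in A_2(\mathbb{T})$.

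This leaves verifying $w\in A_2$, which is the only genuine computation and where I expect the work to lie. For an interval $I$, I will compare $\frac{1}{\abs{I}}\int_I \abs{\theta}^{-\alpha}$ with $\frac{1}{\abs{I}}\int_I \abs{\theta}^{\alpha}$. There are two cases. If $I$ is at distance at least $\abs{I}$ from $0$, both averages are comparable to $d^{\mp\alpha}$, where $d$ is the distance to the origin, so the product is bounded. Otherwise $I\subset[-3\abs{I},3\abs{I}]$, and direct integration gives averages $\lesssim \abs{I}^{-\alpha}/(1-\alpha)$ and $\lesssim\abs{I}^{\alpha}$, so again the product is bounded. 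Since $\alpha<1$ both cases close. This yields $\sup_N\norm{S_N}<\infty$, and hence the Schauder basis property. Alternatively, I could simply cite \cite[Example 10.2.32]{HNVW2017}, as the paper does.
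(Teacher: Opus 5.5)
Your proof is correct and is essentially the paper's route: the paper gives no argument of its own and cites this classical fact from \cite[Example 10.2.32]{HNVW2017} and \cite[Proposition~2.3]{Nielsen2009}, while your chain is exactly the standard proof behind those references --- the identity $D_m = M_{-m}\mathcal{P}M_m - M_{m+1}\mathcal{P}M_{-m-1}$, boundedness of the Riesz projection on $L^2(w)$ via Hunt--Muckenhoupt--Wheeden, and $\abs{\theta}^{-\alpha}\in A_2$ for $0<\alpha<1$. The one detail to add is that on $\mathbb{T}$ the $A_2$ condition runs over arcs for the periodic weight $\operatorname{dist}(\theta,\mathbb{Z})^{-\alpha}$: arcs of length at most $1/4$ lift to intervals inside $[-3/4,3/4]$, where $\operatorname{dist}(\theta,\mathbb{Z})\eqsim\abs{\theta}$ so your real-line computation applies, and longer arcs are handled by $w,w^{-1}\in L^1(\mathbb{T})$.
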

We now additionally fix $\mu > 1$ and introduce the unbounded operator $C$ which acts as the 
Schauder multiplier $C f_k = \mu^k f_k$, with its domain $\cD(C)$ being the closure of the span of $\{f_n\}_{n \geq 1}$ with respect to the graph norm, and set $A \coloneq C^2$.

Recalling the definitions~\eqref{eq:square-function} and~\eqref{eq:defpsiphi}, the main result of this section reads as follows.
\begin{proposition}
    \label{prop:sqrfuncexample}
    The operator $A$ is sectorial with $\omega(A) = 0$, and $-A$ generates a compact exponentially stable analytic semigroup.
   The upper estimate $\norm{x}_{\phi,A} \lesssim \norm{x}$ holds uniformly in $x\in X$, but the lower estimate $\norm{x} \lesssim \norm{x}_{\phi,A}$  does not.
\end{proposition}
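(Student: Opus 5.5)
The plan is to reduce every assertion to two facts about the basis. First, the partial sum projections $P_n$ of $(f_n)_{n\geq 1}$ are uniformly bounded on $X$; write $K:=\sup_n\norm{P_n}$ and $\Delta_k:=P_k-P_{k-1}$. Second, $X$ embeds continuously into $L^2(\mathbb{T})$ but not conversely, because $w(\theta)=\abs{\theta}^{-\alpha}\geq 2^{\alpha}\geq 1$ on $\mathbb{T}$ while $w$ is unbounded near $0$.

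For a bounded sequence $(m_k)$ of bounded variation, Abel summation gives
\[
\sum_k m_k\Delta_k x=\sum_k (m_k-m_{k+1})P_kx+\lim_k m_k\, x ,
\qquad
\Bigl\|\sum_k m_k\Delta_k\Bigr\|\leq K\Bigl(\sum_k\abs{m_k-m_{k+1}}+\sup_k\abs{m_k}\Bigr).
\]
This is the standard Schauder multiplier lemma, and I would use it repeatedly.

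\emph{Sectoriality and the semigroup.} For $\lambda\in\Sigma_{\pi-\varepsilon}$, the resolvent $\lambda(\lambda+A)^{-1}$ is the multiplier $m_k=\lambda/(\lambda+\mu^{2k})$. Along the increasing sequence $t_k=\mu^{2k}$, the map $t\mapsto \lambda/(\lambda+t)$ has total variation on $[0,\infty)$ bounded by a constant depending only on $\varepsilon$. Indeed, it traces a circular arc from $1$ to $0$ of length bounded uniformly for $\arg\lambda$ away from $\pm\pi$. This gives $\omega(A)=0$. Injectivity and $0\in\rho(A)$ follow because $\mu^{2k}\geq\mu^2>1$.

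The semigroup is $S(t)=\sum_k e^{-t\mu^{2k}}\Delta_k$. The sequence $e^{-t\mu^{2k}}$ is monotone, so the multiplier lemma gives
\[
\norm{S(t)}\leq 2Ke^{-t\mu^2},
\]
which is exponential stability. Analyticity follows from $\omega(A)=0$ and \cite[Theorem G.5.2]{HNVW2017}. For compactness, $S(t)-P_nS(t)$ is the multiplier which vanishes for $k\leq n$ and equals $e^{-t\mu^{2k}}$ for $k>n$. Its variation is bounded by $2e^{-t\mu^{2(n+1)}}$, which tends to $0$. Hence $S(t)$ is a norm limit of finite-rank operators.

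\emph{Upper estimate.} Write $x=\sum_k c_kf_k$, where $\norm{f_k}$ equals a fixed constant $c_w$. I would first check the estimate on trigonometric polynomials and then extend by density. Using $w\geq 1$ and the boundedness of $w$ away from... rather, the cleaner route is to compare both sides with $L^2(\mathbb{T})$. By Plancherel in $L^2(\mathbb{T})$ and the substitution used in Lemma~\ref{lem:integrable},
\[
\norm{x}_{\phi,A,L^2}^2=\sum_k\abs{c_k}^2\int_0^\infty\abs{\phi(t\mu^{2k})}^2\frac{\dd t}{t}=\tfrac12\sum_k\abs{c_k}^2=\tfrac12\norm{x}_{L^2}^2 .
\]
Here $\norm{\cdot}_{\phi,A,L^2}$ denotes the same square function computed in the $L^2(\mathbb{T})$ norm. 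It then suffices to show two bounds: $\norm{\phi(tA)x}\lesssim\norm{\phi(tA)x}_{L^2}$ in an integrated sense, and $\norm{x}_{L^2}\leq\norm{x}$. The second is immediate from $w\geq1$. The first is the delicate point, since $\norm{\cdot}_w$ dominates $\norm{\cdot}_{L^2}$ and not the other way round.

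To handle it, I would discretise $t\in[\mu^{-2(n+1)},\mu^{-2n}]$. Then $\phi(tA)=\sum_j a_{j-n}(t)\Delta_j$ with $\abs{a_m(t)}\lesssim\mu^{-\abs{m}}$, because $\phi$ decays like $\abs{z}^{1/2}$ at $0$ and exponentially at $\infty$. The triangle inequality gives
\[
\norm{\phi(tA)x}\leq c_w\sum_m\mu^{-\abs{m}}\abs{c_{n+m}} .
\]
Squaring, summing over $n$, and applying Young's inequality on $\mathbb{Z}$ yields
\[
\norm{x}_{\phi,A}^2\lesssim\sum_k\abs{c_k}^2=\norm{x}_{L^2}^2\leq\norm{x}^2 .
\]
This step is the main point to verify carefully: the geometric spacing $\mu^{2k}$ must make the overlap sum summable. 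I expect it to be routine.

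\emph{Failure of the lower estimate.} The same computation, with $\norm{\cdot}\geq\norm{\cdot}_{L^2}$ and Plancherel, gives the two-sided bound
\[
\tfrac12\norm{x}_{L^2}^2\leq\norm{x}_{\phi,A}^2\lesssim\norm{x}_{L^2}^2 .
\]
So a lower estimate $\norm{x}\lesssim\norm{x}_{\phi,A}$ would force $\norm{x}_w\lesssim\norm{x}_{L^2}$ on trigonometric polynomials, and hence on $L^2(\mathbb{T})$ by density. This is false. Take $x$ to be a trigonometric polynomial approximating $\varepsilon^{-1/2}\1_{[0,\varepsilon]}$, for instance a Fejér mean of it. Its $L^2$ norm stays bounded, while its weighted norm behaves like $\varepsilon^{-\alpha/2}\to\infty$. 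Such $x$ lie in $\cD(A)$, so no closure issues arise.
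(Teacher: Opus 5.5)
Your proof is correct and follows the paper's route. Like the paper, you bound $\norm{x}_{\phi,A}$ by the $\ell^2$ norm of the Fourier coefficients using only $\norm{f_k}\lesssim 1$ and the off-diagonal decay $\mu^{-\abs{j-k}}$; you do this via a dyadic split in $t$ and Young's inequality, where the paper uses the Gram matrix $B_{jk}$ and the Schur test. You then use $w\geq 1$ for the upper estimate and the unboundedness of $w$ at $0$ for the failure of the lower one, with smoothed normalised indicators in place of the paper's Dirichlet kernels $x_n$. Your Abel-summation treatment of sectoriality, exponential stability and compactness is a self-contained version of what the paper takes from the cited lemmas of Haase and HNVW.
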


The failure of the lower square function estimate will be a consequence of the following lemma.
\begin{lemma}
    For all finitely supported $(a_k)_{k \geq 1}$, it holds that
    \begin{equation}
    \norm{\sum_k a_k f_k}_{\phi,A}^2 
    \lesssim \norm{(a_k)_{k \geq 1}}_{\ell^2}^2 \label{eq:QC-upper-coefficient-estimate}.
\end{equation}
\end{lemma}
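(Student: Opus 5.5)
Since $x=\sum_k a_k f_k$ is a finite sum and $A f_k=\mu^{2k}f_k$, the functional calculus acts diagonally: $\phi(tA)f_k=\phi(t\mu^{2k})f_k$. The plan is to bound the $X$-norm of $\phi(tA)x=\sum_k a_k\phi(t\mu^{2k})f_k$ for each fixed $t$ and then integrate in $\dd t/t$.

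The key observation is that $\phi(s)=s^{1/2}e^{-s^{1/2}}$ is real and positive on $(0,\infty)$ with $\phi(s)\le \min\{s^{1/2}, C_0 s^{-1/2}\}$. Hence the coefficients $\phi(t\mu^{2k})$ are geometrically concentrated around the single index $k_t$ with $\mu^{2k_t}\approx t^{-1}$.

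\emph{Step 1 (pointwise bound).} We do not use the basis constant, since that would lose the $\ell^2$ structure. Instead we use the triangle inequality in $X$ together with $\norm{f_k}_X=\norm{w}_{L^1}^{1/2}=:K<\infty$, which is finite because $\alpha<1$. This gives
\[
\norm{\phi(tA)x}\le K\sum_k |a_k|\,\phi(t\mu^{2k}).
\]

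\emph{Step 2 (Schur test).} Substituting $t=\mu^{-2u}$ gives $\dd t/t = 2\log\mu\,\dd u$, and
\[
\norm{x}_{\phi,A}^2\lesssim \int_{\R}\Big(\sum_k |a_k|\,\phi(\mu^{2(k-u)})\Big)^2\dd u .
\]
Put $h(v)=\phi(\mu^{2v})\le \min\{\mu^{v},C_0\mu^{-v}\}$, so that $h\in L^1(\R)$ and $\sup_u\sum_k h(k-u)<\infty$. By Cauchy--Schwarz,
\[
\Big(\sum_k |a_k| h(k-u)\Big)^2\le \Big(\sum_k h(k-u)\Big)\sum_k |a_k|^2 h(k-u).
\]
Integrating in $u$ yields $\norm{x}_{\phi,A}^2\lesssim \sup_u\sum_k h(k-u)\cdot\norm{h}_{L^1}\cdot\norm{(a_k)}_{\ell^2}^2$. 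This is exactly \eqref{eq:QC-upper-coefficient-estimate}.

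\emph{Main obstacle.} The only delicate point is recognising that no orthogonality or unconditionality is needed. The geometric separation of the eigenvalues $\mu^{2k}$ makes the kernel $h(k-u)$ summable in both variables, so the crude triangle inequality already suffices. One should also check that $\phi(tA)$ really acts diagonally on $f_k$. This holds because $f_k\in\cD(A)$ is an eigenvector, so the Cauchy integral gives $(\lambda-A)^{-1}f_k=(\lambda-\mu^{2k})^{-1}f_k$ and hence $\phi(tA)f_k=\phi(t\mu^{2k})f_k$.
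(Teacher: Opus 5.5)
Your proof is correct and is essentially the paper's argument. The paper expands $\norm{\phi(tA)x}^2$ and integrates explicitly to get $B_{jk}=\frac{2\mu^{j+k}}{(\mu^j+\mu^k)^2}\langle f_j,f_k\rangle$. It then uses the crude bound $|\langle f_j,f_k\rangle|\le\norm{f_j}\norm{f_k}\lesssim 1$, which is the same orthogonality-free step as your triangle inequality, to get $|B_{jk}|\lesssim\mu^{-|j-k|}$, and finishes with the discrete Schur test. Your Cauchy--Schwarz in $u$ against the exponentially decaying kernel $h(k-u)$ is the continuous form of that Schur test: it avoids computing $\int_0^\infty\phi(t\mu^{2j})\phi(t\mu^{2k})\,\frac{\dd t}{t}$ but rests on the same mechanism.
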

\begin{proof}
A direct integration yields
\[
  \norm{\sum_k a_kf_k}_{\phi,A}^2
 =
 \sum_{j,k\geq 1}a_j\overline{a_k}B_{jk},
 \qquad
 B_{jk}
 :=
 \frac{2\mu^{j+k}}{(\mu^j+\mu^k)^2}\langle f_j,f_k\rangle.
\]
Since $\norm{f_n} \lesssim 1$, we can estimate the matrix coefficients as $\abs{B_{jk}} \lesssim \mu^{-\abs{j - k}}$ uniformly in $j$ and $k$. As a consequence, we find the following bounds for the row and column sums:
\begin{equation}
 \sup_{j\geq 1}\sum_{k\geq 1}|B_{jk}| + \sup_{k\geq 1}\sum_{j\geq 1}|B_{jk}| \lesssim 1.
\end{equation}
By the (unweighted) Schur test, it follows that $\|B\|_{\mathcal L(\ell^2)} \lesssim 1$, which yields the desired estimate.
\end{proof}

\begin{proof}[Proof of Proposition~\ref{prop:sqrfuncexample}.]
    The claims in the first sentence follow directly from \cite[Lemmas~9.1.1 and 9.1.2]{Haase2006} and \cite[Proposition 10.2.28]{HNVW2017}.
    To see the failure of the lower square-function estimate, let $n \geq 1$, define $a_{n,k} \coloneq \one_{k \leq 2n+1}$ for $k \geq 1$, and set
    \begin{equation}
        x_n(\theta) \coloneq \sum_k a_{n,k} f_k(\theta) = \sum_{\abs{\ell} \leq n} e^{2 \pi i \ell \theta} = \frac{\sin\big((2n+1)\pi \theta \big)}
     {\sin(\pi \theta)}, \quad \theta \in \mathbb{T}.
    \end{equation}
    For a sufficiently small absolute constant $c>0$,
    it holds that $|x_n(\theta)|\gtrsim n$ whenever $0<\theta <c/n$.
    Consequently,
    \begin{align}\label{eq:normxn}
     \|x_n\|^2
     =
     \int_{\mathbb T}|x_n(\theta) |^2w(\theta)\,\dd \theta
     \gtrsim
     n^2\int_0^{c/n}\theta^{-\alpha}\,\dd\theta
     \gtrsim n^{1+\alpha}, \quad n \geq 1.
    \end{align}
    On the other hand, for the coefficients we have $\norm{(a_{n,k})_{k \geq 1}}_{\ell^2}^2 \eqsim n$.
    Thus the estimate $\norm{x_n} \lesssim \norm{(a_{n,k})_{k \geq 1}}_{\ell^2}$ fails, which implies via~\eqref{eq:QC-upper-coefficient-estimate} that $\norm{x_n} \lesssim \norm{x_n}_{\phi,A}$ must also fail.

    For the upper estimate, let $x = \sum_k a_k f_k$ with $(a_k)_{k\geq 1}$ finitely supported and note that
\[\norm{x}_{\phi,A}^2 
    \lesssim \norm{(a_k)_{k \geq 1}}_{\ell^2}^2 = \norm{x}_{L^2(\mathbb T)}^2 \leq \|x\|^2,\]
    where  we used \eqref{eq:QC-upper-coefficient-estimate}, Parseval's identity and $w(\theta)\geq 1$. The general case follows by density. 
\end{proof}

\section{A counterexample with upper and lower square function bounds}
\label{sec:endpoint-calculus}

Theorem \ref{thm:necessary-square} shows the necessity of the lower square function bound for any exponentially stable $C_0$-semigroup (and thus by translation for any $C_0$-semigroup). On the other hand, in Proposition \ref{prop:Maxsquare} we showed that for exponentially stable {\em analytic} semigroups the lower and upper square function bounds imply the sharp maximal estimates. In this section, we show that in the latter result one cannot omit the analyticity. 
More precisely, we prove the following:
\begin{theorem}
\label{thm:endpoint-counterexample}
There exist a Hilbert space $X$ and an operator $A$ on
$X$ such that
\begin{enumerate}[label=\textup{(\roman*)}]
\item $-A$ generates a compact and exponentially stable $C_0$-semigroup $S$;
\item\label{it:endpointexample} $\omega(A)=\pi/2$, and $A$ satisfies the two-sided
      square-function estimate
      \begin{equation}
          \label{eq:endpointsqrfunc}
           \norm{x}
           \lesssim \norm{x}_{\phi,A}
           \lesssim \norm{x},
           \qquad x\in X;
        \end{equation}
\item The maximal inequality \eqref{eq:maximal-inequality} with $p = 2$ and $I = \R_+$ fails, i.e., $\cM_{2,\R_+}(S) = \infty$.
\end{enumerate}
\end{theorem}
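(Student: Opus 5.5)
The plan is to build $A$ as an orthogonal direct sum of finite-dimensional blocks. Each block is a truncation of the Schauder multiplier from Section~\ref{sec:schauder}, pushed far up the imaginary axis. Two invariances of the maximal constant drive the argument.

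\emph{(a) Imaginary shifts.} If $S$ is replaced by $e^{-i\gamma t}S(t)$, then $U_g(t)$ becomes $e^{-i\gamma t}U_{\tilde g}(t)$ with $\tilde g(s)=e^{i\gamma s}g(s)$. Since $\tilde g$ is predictable with $\norm{\tilde g(s)}=\norm{g(s)}$, $\cM_{2,\R_+}$ is unchanged.

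\emph{(b) Restriction.} If $X=\bigoplus_n X_n$ with $A=\bigoplus_n A_n$ acting diagonally, then $\cM_{2,\R_+}(S)\ge \cM_{2,\R_+}(S_n)$ for every $n$: just take integrands with values in $X_n$.

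Concretely, let $A_0$ be the operator of Proposition~\ref{prop:sqrfuncexample} on $Y=L^2(\mathbb T,w)$. Let $Y_N=\spn\{f_1,\dots,f_{2N+1}\}$ and let $A_0^{(N)}=A_0|_{Y_N}$, which is well defined since $Y_N$ is invariant. Set
\[
X_n=Y_{N_n}, \qquad A_n=A_0^{(N_n)}+i\gamma_n ,
\]
where $N_n\to\infty$ and $\gamma_n$ grows much faster than $\mu^{2(2N_n+1)}$ (for instance $\gamma_n\ge n\,\mu^{4N_n+2}$). Then put $X=\bigoplus_n X_n$ as an $\ell^2$-sum and $A=\bigoplus_n A_n$.

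\emph{Property (iii).} By (a) and (b), $\cM_{2,\R_+}(S)\ge \cM_{2,\R_+}(e^{-tA_0^{(N_n)}})$. Each semigroup $e^{-tA_0^{(N_n)}}$ is exponentially stable, since its spectrum lies in $[\mu^2,\infty)$. Theorem~\ref{thm:necessary-square} therefore gives
\[
\cM_{2,\R_+}(S)\ \ge\ \tfrac12\,\frac{\norm{x}}{\norm{x}_{\phi,A_0^{(N_n)}}}\qquad\text{for every } x\in Y_{N_n}.
\]
The Dirichlet kernels $x_n$ from the proof of Proposition~\ref{prop:sqrfuncexample} lie in $Y_{n}$, and the square function of $A_0^{(N)}$ agrees with that of $A_0$ on $Y_N$. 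Hence the estimates \eqref{eq:normxn} and \eqref{eq:QC-upper-coefficient-estimate} make the right-hand side grow like $n^{\alpha/2}$. This forces $\cM_{2,\R_+}(S)=\infty$.

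\emph{Property (i).} The spectrum of $A$ consists of the eigenvalues $\mu^{2k}+i\gamma_n$, all with real part at least $\mu^2$. Compactness of the resolvent follows because each block is finite-dimensional and $\norm{(\lambda+A_n)^{-1}}\to0$ as $n\to\infty$ for $\lambda$ fixed, so the resolvent is a norm limit of finite-rank operators. Exponential stability needs a uniform bound $\norm{e^{-tA_n}}\le Me^{-\mu^2 t}$. The factor $e^{-i\gamma_n t}$ is unimodular, so this reduces to $e^{-tA_0}$ restricted to $Y_N$, which is uniformly bounded by the basis constant because $e^{-tA_0}$ is a Schauder multiplier.

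\emph{Property (ii), the main obstacle.} This is the uniform bounded $H^\infty(\Sigma_\theta)$-calculus for every $\theta>\pi/2$. The two-sided estimate \eqref{eq:endpointsqrfunc} then follows from McIntosh's theorem. The equality $\omega(A)=\pi/2$ is clear: the eigenvalues $\mu^{2}+i\gamma_n$ have arguments tending to $\pi/2$, while sectoriality at angle $\pi/2$ comes from the resolvent bound below.

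For a fixed block, the spectrum of $A_n$ lies in the segment $i\gamma_n+[\mu^2,\mu^{4N_n+2}]$. This segment sits inside the disc $D_n$ of radius $\rho_n=\gamma_n/4$ around $i\gamma_n+\rho_n$. For every $\theta>\pi/2$ the doubled disc $2D_n$ is contained in $\Sigma_\theta$ once $n$ is large, because its centre is at distance $\rho_n$ from the imaginary axis while its height is about $\gamma_n$. I would write $f(A_n)$ by the Riesz--Dunford integral over $\partial(2D_n)$, giving
\[
\norm{f(A_n)}\le \norm{f}_\infty\,\rho_n\,\sup_{\lambda\in\partial(2D_n)}\norm{(\lambda-A_n)^{-1}} .
\]
The point to check carefully is the uniform resolvent bound $\norm{(\lambda-A_0^{(N)})^{-1}}\lesssim 1/\mathrm{dist}(\lambda,[\mu^2,\mu^{4N+2}])$ for $\lambda$ at distance at least $\rho_n$ from that interval. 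Here I would use that $A_0$ is sectorial of angle $0$, so its resolvent is bounded by $M_\omega/\abs{\lambda}$ outside small sectors. I would combine this with the uniform boundedness of the basis projections onto $Y_N$, noting that these commute with $A_0$.

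If the bound holds, then $\norm{f(A_n)}\lesssim\norm{f}_\infty$ uniformly for all large $n$. The finitely many remaining blocks can be absorbed into the constant, after possibly enlarging $\gamma_n$ for small $n$ so that every disc fits in $\Sigma_\theta$. Taking the direct sum then gives the bounded $H^\infty(\Sigma_\theta)$-calculus for every $\theta>\pi/2$, and so $\omega_{H^\infty}(A)=\pi/2$.
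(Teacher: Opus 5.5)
Your overall architecture matches the paper's: truncated Schauder-multiplier blocks, large imaginary shifts, invariance of $\cM_{2,\R_+}$ under unimodular factors and under restriction to a block, and Theorem~\ref{thm:necessary-square} applied to the Dirichlet kernels. However, there is a genuine gap in (i).

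The theorem asks for a \emph{compact semigroup}, i.e.\ $S(t)$ compact for every $t>0$, and you only prove that the resolvent is compact. Compact resolvent gives compactness of $S(t)$ only together with norm continuity of $t\mapsto S(t)$ on $(0,\infty)$. Your semigroup lacks this, and your construction is exactly a case where the two notions differ. Every block $X_n=Y_{N_n}$ contains the constant function $f_1$, and $A_nf_1=(\mu^2+i\gamma_n)f_1$. Let $u_n$ be the normalised copy of $f_1$ in the $n$-th block. Then $(u_n)$ is orthonormal, and the vectors $S(t)u_n=e^{-i\gamma_nt}e^{-\mu^2t}u_n$ are mutually orthogonal with norm $e^{-\mu^2t}$. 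Hence $S(t)$ is compact for no $t>0$.

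The missing ingredient is the dilation the paper builds into its blocks, $B_n=n(A_n+K\norm{A_n}iI)$. It gives $\norm{S_n(t)}\lesssim e^{-cnt}$, so the block norms tend to zero and $S(t)$ is a norm limit of finite-rank operators. You can repair your construction the same way by replacing $A_n$ with $nA_n$. This is harmless for everything else: $\norm{\cdot}_{\phi,\cdot}$, the $H^\infty$ bounds, and hence the lower bound from Theorem~\ref{thm:necessary-square} are all invariant under dilations of the operator.

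There are also two smaller points in (ii).

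First, your disc picture is invariant under scaling by $\gamma_n$. With $\rho_n=\gamma_n/4$, the disc $2D_n$ contains $-\gamma_n/4+i\gamma_n$, whose argument is $\pi/2+\arctan(1/4)$. So $2D_n$ is \emph{not} contained in $\Sigma_\theta$ for $\theta$ close to $\pi/2$, however large $n$ is. This does no real harm. A single angle $\theta\in(\pi/2,\pi)$ suffices, because on a Hilbert space McIntosh's theorem then yields \eqref{eq:endpointsqrfunc} and $\omega_{H^\infty}(A)=\omega(A)=\pi/2$. Alternatively, take $\rho_n=o(\gamma_n)$.

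Second, sectoriality of $A_0$ does not control the resolvent at contour points such as $i\gamma_n+3\rho_n$. There $\lambda-i\gamma_n$ is positive real, near where the untruncated $A_0$ has eigenvalues $\mu^{2k}$, so the sectorial resolvent bound is unavailable. Use a Neumann series instead. Abel summation against the basis projections gives $\norm{A_0^{(N)}}\lesssim\mu^{4N+2}$. Once $\rho_n\ge 2\norm{A_0^{(N_n)}}$, every $\lambda\in\partial(2D_n)$ satisfies $\abs{\lambda-i\gamma_n}\ge\rho_n$, hence $\norm{(\lambda-A_n)^{-1}}\le 2/\rho_n$. Your contour integral then gives $\norm{f(A_n)}\le 4\norm{f}_\infty$.

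With these repairs, your direct contour argument is a correct, self-contained substitute for the perturbation theorem the paper quotes for $iI+(K\norm{A_n})^{-1}A_n$.
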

Recall that $\phi$ was defined in \eqref{eq:defpsiphi}.

As previously mentioned, McIntosh's theorem shows that \ref{it:endpointexample} is equivalent to $A$ having a bounded $H^{\infty}$-calculus, which in this case has angle $\omega_{H^\infty}(A)=\pi/2$.
The key observation needed in the proof is the fact that $\cM_{p,I}$ is invariant under purely imaginary shifts of the generator. 
\begin{proposition}\label{prop:unimodular}
Let $S$ be a $C_0$-semigroup, let $\eta \in \R$ and define $S_{\eta}(t) = e^{-i\eta t}S(t)$.
Then $\cM_{p,I}(S) = \cM_{p,I}(S_{\eta})$ for any $p \in (0,\infty)$ and any interval $I$.
\end{proposition}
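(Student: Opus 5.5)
The idea is to absorb the unimodular factor $e^{-i\eta t}$ into the integrand by a pathwise rotation. Then the maximal estimate for $S_\eta$ becomes the maximal estimate for $S$ applied to a modified integrand with the same norm.

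Fix a predictable step process $g$ and set $\tilde g(s) := e^{i\eta s} g(s)$. Then $\tilde g$ is again predictable, being the product of $g$ with a deterministic continuous scalar function. It is not a step process, but it can be approximated by step processes, or one can use the extension of \eqref{eq:maximal-inequality} to all of $L^p_{\cP}(\Omega;L^2(I;X))$ noted in the preliminaries. Pointwise in $(s,\omega)$ we have $\norm{\tilde g(s)} = \norm{g(s)}$, hence $\norm{\tilde g}_{L^p(\Omega;L^2(I;X))} = \norm{g}_{L^p(\Omega;L^2(I;X))}$.

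The key computation: since $e^{-i\eta(t-s)} = e^{-i\eta t} e^{i\eta s}$ and $e^{-i\eta t}$ is a deterministic scalar for fixed $t$, it can be pulled out of the stochastic integral. This gives
\[
U^{S_\eta}_g(t) = \int_0^t e^{-i\eta(t-s)} S(t-s) g(s)\dd\beta_s = e^{-i\eta t}\int_0^t S(t-s)\tilde g(s)\dd\beta_s = e^{-i\eta t}\, U^{S}_{\tilde g}(t).
\]
This holds almost surely for each fixed $t$. Hence $\sup_{t\in I\cap\Q}\norm{U^{S_\eta}_g(t)} = \sup_{t\in I\cap\Q}\norm{U^S_{\tilde g}(t)}$ almost surely.

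When $\cM_{p,I}(S)<\infty$, the process $U^S_{\tilde g}$ has a continuous version. Multiplying by the continuous function $e^{-i\eta t}$ gives a continuous version of $U^{S_\eta}_g$, so the two suprema agree. We conclude
\[
\bigl(\E\sup_{t\in I}\norm{U^{S_\eta}_g(t)}^p\bigr)^{1/p} \le \cM_{p,I}(S)\,\norm{\tilde g}_{L^p(\Omega;L^2(I;X))} = \cM_{p,I}(S)\,\norm{g}_{L^p(\Omega;L^2(I;X))},
\]
which shows $\cM_{p,I}(S_\eta)\le\cM_{p,I}(S)$. The reverse inequality follows by symmetry, since $S = (S_\eta)_{-\eta}$. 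If $\cM_{p,I}(S)=\infty$, the symmetric argument shows $\cM_{p,I}(S_\eta)=\infty$ too.

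The only delicate point is bookkeeping. The definition of $\cM_{p,I}$ uses step processes, and $\tilde g$ is not one, so I must either invoke the density/extension remark or check that approximating $\tilde g$ by step processes passes to the supremum. Suprema over rational times with Fatou's lemma handle this cleanly: the $L^p$ limit of the approximants converges along a subsequence almost surely at each rational $t$. Apart from this, the argument is a direct identity.
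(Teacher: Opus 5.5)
Your proposal is correct and is the same argument as the paper's: pull the deterministic factor $e^{-i\eta t}$ out of the stochastic integral, absorb $e^{i\eta s}$ into the integrand, use that unimodular factors preserve the norms, and get equality by symmetry via $S=(S_\eta)_{-\eta}$. Your extra bookkeeping fills in details the paper leaves implicit: approximating $e^{i\eta s}g$ by step processes, passing to the limit with rational suprema and Fatou's lemma, and treating the case $\cM_{p,I}(S)=\infty$.
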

\begin{proof}
    This follows by writing 
    \begin{align*}
       \int_0^t S_\eta(t-s)g(s)\,\dd\beta_s =  e^{-it\eta} \int_0^t S(t-s) e^{is\eta}g(s)\,\dd\beta_s 
    \end{align*}
    and using that the unimodular factors do not change the norms.
\end{proof}

We now add large imaginary shifts to finite-dimensional restrictions of the
Schauder multiplier from Section~\ref{sec:schauder}. 
This results in a sequence of operators which satisfy two-sided square-function estimates uniformly, while their maximal constants tend to infinity.
Taking their orthogonal sum gives the desired example to prove Theorem~\ref{thm:endpoint-counterexample}.

For our construction, we let $A$, $(f_k)_{k \geq 1}$, and $X$ be as in Section~\ref{sec:schauder} (with $\alpha, \mu$ fixed once and for all). We define
\[
 X_n \coloneq \spn\{f_1,\ldots,f_{2n+1}\}, \qquad A_n \coloneq A|_{X_n},
\]
as well as
\[
B_n \coloneq n(A_n+ K \norm{A_n} i I), \qquad S_n(t) \coloneq e^{-tB_n},
\]
with $K > 0$ to be chosen shortly.
We claim the following.

\begin{proposition}[Finite-dimensional estimates]
\label{prop:endpoint-blocks}
There exists $\gamma > 0$ such that the following estimates hold uniformly in $n$:

\begin{align}
    \label{eq:Snexpstab}
 \norm{S_n(t)}&\lesssim e^{-\gamma n t}, \qquad t\ge0, \\
    \label{eq:SnMblowup}
  \cM_{2,\R_+}(S_n) &\gtrsim n^{\alpha/2}, \\
  \label{eq:squarefunctionbound}
\norm{x} &\lesssim \norm{x}_{\phi,B_n} \lesssim \norm{x}, \qquad x\in X_n.
\end{align}
\end{proposition}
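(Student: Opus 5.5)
The plan is to treat the three estimates separately. Two observations do most of the work. First, every operator $f(A_n)$ that we need is a finite Schauder multiplier $f_k\mapsto f(\mu^k)f_k$. Second, the imaginary shift $K\norm{A_n}i$ and the time factor $n$ are invisible to the maximal constant $\cM_{2,\R_+}$, while they radically change the square function. Throughout, $X_n$ carries the norm of $X$, and we use the standard bound for a Schauder multiplier with respect to the basis $(f_k)$ with basis constant $\kappa$:
\[
\norm{\textstyle\sum_k m_k a_k f_k}\lesssim_\kappa \Bigl(\sup_k|m_k|+\sum_k|m_{k+1}-m_k|\Bigr)\norm{\textstyle\sum_k a_kf_k}.
\]
In particular $\norm{A_n}\lesssim \mu^{2n+1}$. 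We also use the trivial lower bound $\norm{A_n}\ge\mu^{2n+1}$, which holds because $\mu^{2n+1}$ is an eigenvalue of $A_n$.

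\emph{Step 1: \eqref{eq:Snexpstab}.} Write $S_n(t)=e^{-iKn\norm{A_n}t}e^{-ntA_n}$. The operator $e^{-ntA_n}$ is the multiplier with symbol $m_k=e^{-nt\mu^k}$, $k\le 2n+1$. This symbol is decreasing in $k$, so the sup plus the total variation is at most $2e^{-nt\mu}$. Hence $\norm{S_n(t)}\lesssim e^{-\mu nt}$, and we may take $\gamma=\mu$.

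\emph{Step 2: \eqref{eq:SnMblowup}.} By Proposition~\ref{prop:unimodular} with $\eta=Kn\norm{A_n}$, we have $\cM_{2,\R_+}(S_n)=\cM_{2,\R_+}(t\mapsto e^{-ntA_n})$.

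Next we remove the factor $n$ by Brownian scaling. Put $\tilde\beta_r=\sqrt n\,\beta_{r/n}$ with the time-changed filtration, and $\tilde g(r)=n^{-1/2}g(r/n)$. Then
\[
\int_0^t e^{-n(t-s)A_n}g(s)\dd\beta_s=\int_0^{nt}e^{-(nt-r)A_n}\tilde g(r)\dd\tilde\beta_r,
\qquad \norm{\tilde g}_{L^2(\R_+;X)}=\norm{g}_{L^2(\R_+;X)}.
\]
Taking the supremum over $t\in\R_+$ shows that the maximal constants agree; we work on the correspondingly rescaled stochastic basis.

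We then apply Theorem~\ref{thm:necessary-square} to $e^{-tA_n}$ on $X_n$, which is exponentially stable by Step 1. This gives $\cM_{2,\R_+}(S_n)\ge \tfrac12\norm{x_n}/\norm{x_n}_{\phi,A_n}$. We take $x_n\in X_n$ to be the Dirichlet kernel from the proof of Proposition~\ref{prop:sqrfuncexample}. Since $\phi(tA_n)=\phi(tA)|_{X_n}$, estimate \eqref{eq:QC-upper-coefficient-estimate} gives $\norm{x_n}_{\phi,A_n}\lesssim n^{1/2}$, while \eqref{eq:normxn} gives $\norm{x_n}\gtrsim n^{(1+\alpha)/2}$. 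Therefore $\cM_{2,\R_+}(S_n)\gtrsim n^{\alpha/2}$.

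\emph{Step 3: \eqref{eq:squarefunctionbound}, the main obstacle.} The substitution $t\mapsto t/n$ preserves $\dd t/t$, so $\norm{x}_{\phi,B_n}=\norm{x}_{\phi,A_n+i\lambda}$ with $\lambda=K\norm{A_n}$. The idea is that all eigenvalues $\mu^k+i\lambda$ lie within relative distance $K^{-1}$ of $i\lambda$. Hence $\phi(t(A_n+i\lambda))$ should be close to the scalar $\phi(it\lambda)I$, and
\[
\int_0^\infty|\phi(it\lambda)|^2\frac{\dd t}{t}=\int_0^\infty|\phi(is)|^2\frac{\dd s}{s}=:c_0\in(0,\infty),
\]
independently of $\lambda$. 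By the triangle inequality in $L^2(\R_+,\dd t/t;X)$ it suffices to show
\[
\Bigl(\int_0^\infty\norm{\phi(t(A_n+i\lambda))-\phi(it\lambda)}^2\frac{\dd t}{t}\Bigr)^{1/2}\le \varepsilon(K),
\]
uniformly in $n$, with $\varepsilon(K)\to0$ as $K\to\infty$. Once this holds we fix $K$ with $\varepsilon(K)\le c_0^{1/2}/2$, which yields both bounds with constants $c_0^{1/2}\pm c_0^{1/2}/2$.

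To bound the difference operator, we use the multiplier estimate with symbol $m_k=\phi(t(\mu^k+i\lambda))-\phi(it\lambda)$. Both its sup and its variation are bounded by
\[
t\int_0^{\norm{A_n}}|\phi'(t(\xi+i\lambda))|\dd\xi\le t\norm{A_n}\sup_{0\le\xi\le\lambda/K}|\phi'(t(\xi+i\lambda))|.
\]
Now $|\phi'(z)|\lesssim |z|^{-1/2}\wedge e^{-c|z|^{1/2}}$ on $\{\xi+i\lambda:\ 0\le\xi\le\lambda/K\}$, and there $|z|\eqsim t\lambda$. The substitution $s=t\lambda$ then gives an $L^2(\dd t/t)$ error of order
\[
K^{-1}\Bigl(\int_0^\infty s^2\bigl(s^{-1}\wedge e^{-cs^{1/2}}\bigr)\frac{\dd s}{s}\Bigr)^{1/2}\lesssim K^{-1},
\]
where we used $\norm{A_n}/\lambda=K^{-1}$.

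I expect the delicate point to be checking that the decay of $\phi'$ remains uniform along the near-imaginary segment. The argument of $z^{1/2}$ stays near $\pi/4$ there, so $\operatorname{Re}z^{1/2}\gtrsim|z|^{1/2}$, which should give exactly this uniformity. Finally, by Proposition~\ref{prop:unimodular} the imaginary shift and the scaling do not affect Step 2, so the same fixed $K$ works for all three estimates.
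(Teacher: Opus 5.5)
Your proof is correct, and for the central estimate \eqref{eq:squarefunctionbound} it takes a genuinely different route from the paper.

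\textbf{Step 3, compared with the paper.} The paper rescales to $\wt{B}_n=iI+(K\norm{A_n})^{-1}A_n$ and treats this as a small bounded perturbation of $iI$. It then invokes an abstract perturbation theorem for the $H^\infty$-calculus (\cite[Theorem~16.2.8]{HNVW23}) to obtain a bounded $H^\infty(\Sigma_\omega)$-calculus, $\omega\in(\pi/2,\pi)$, uniformly in $n$. Finally it passes to the two-sided $\phi$-square function via McIntosh's theorem. You instead compare $\phi(t(A_n+i\lambda))$ directly with the scalar $\phi(it\lambda)I$. You control the difference by the Abel-summation (sup plus variation) bound for Schauder multipliers. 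You then integrate an explicit bound for $\phi'$ on the near-imaginary segment, where $\operatorname{Re} z^{1/2}\geq\abs{z}^{1/2}/\sqrt2$.

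What each route buys:
\begin{itemize}
\item Your argument is elementary and self-contained. It gives the explicit constants $c_0^{1/2}(1\pm\tfrac12)$ and the rate $\varepsilon(K)\lesssim\kappa K^{-1}$. It also sidesteps the question of uniformity of the constants in McIntosh's theorem.
\item The paper's argument does not use the basis structure of $A_n$. It yields the stronger conclusion of a uniformly bounded $H^\infty$-calculus.
\end{itemize}

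\textbf{Steps 1 and 2.} Here the differences are cosmetic. The paper obtains \eqref{eq:Snexpstab} by restricting the exponentially stable semigroup $e^{-tA}$ to $X_n$. For the blow-up it applies Theorem~\ref{thm:necessary-square} directly to $e^{-tnA_n}$, using the dilation invariance $\norm{x}_{\phi,nA_n}=\norm{x}_{\phi,A_n}$. That makes your Brownian rescaling unnecessary, although it is valid as written.

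\textbf{Two slips to correct.} Neither affects the argument.
\begin{itemize}
\item Since $A=C^2$, the eigenvalues of $A_n$ are $\mu^{2k}$ for $1\le k\le 2n+1$, not $\mu^k$. So the symbol in Step 1 is $e^{-nt\mu^{2k}}$, which gives $\gamma=\mu^2$ and a fortiori $\gamma=\mu$. Also $\norm{A_n}\eqsim\mu^{4n+2}$, and $\mu^{2n+1}$ is not an eigenvalue. Step 3 only uses that the eigenvalues increase in $k$ and lie in $(0,\norm{A_n}]$, so it is unaffected.
\item The bound $\abs{\phi'(z)}\lesssim\abs{z}^{-1/2}\wedge e^{-c\abs{z}^{1/2}}$ is false near $z=0$. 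Indeed $\phi'(z)=\tfrac12e^{-z^{1/2}}(z^{-1/2}-1)$ behaves like $\tfrac12 z^{-1/2}$ there. The correct statement on your segment is $\abs{\phi'(z)}\lesssim\abs{z}^{-1/2}e^{-c\abs{z}^{1/2}}$. With it, the squared error is bounded by a constant times $\kappa^2K^{-2}\int_0^\infty e^{-2cs^{1/2}}\dd s<\infty$, so your $O(K^{-1})$ conclusion stands.
\end{itemize}
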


\begin{proof}
For the stability bound on $S_n$, we first observe that $S_n(t / n) = e^{-t K \norm{A_n} i} e^{-t A_n}$.
Since $A_n$ is a restriction of $A$, \eqref{eq:Snexpstab} can then be deduced from the exponential stability of $e^{-tA}$ stated in~Proposition~\ref{prop:sqrfuncexample}.
Next we prove the lower bound on $\cM_{2,\R_+}(S_n)$.
Letting $x_n$ be as in the proof of Proposition~\ref{prop:sqrfuncexample}, we obtain from \eqref{eq:QC-upper-coefficient-estimate} and \eqref{eq:normxn} (note that $x_n \in X_n$):
\begin{equation*}
  \norm{x_n}_{\phi,nA_n}^2 
  = \norm{x_n}_{\phi,A_n}^2
  =\norm{x_n}_{\phi,A}^2\lesssim n, \qquad  \norm{x_n}^2\gtrsim n^{1+\alpha}.
\end{equation*}
Therefore, Proposition~\ref{prop:unimodular} with $\eta = -n K \norm{A_n}$ and Theorem~\ref{thm:necessary-square} yield
\begin{equation*}
 \cM_{2,\R_+}(S_n) = \cM_{2,\R_+}((e^{-t nA_n})_{t \geq 0})
 \ge\frac{\norm{x_n}}{2\norm{x_n}_{\phi,nA_n}}
 \gtrsim n^{\alpha/2}.
\end{equation*}

It remains to check the square-function estimates \eqref{eq:squarefunctionbound}.
By invariance of the square function, it actually suffices to prove~\eqref{eq:squarefunctionbound}
with $B_n$ replaced by the operator
\begin{equation}
    \wt{B}_n \coloneq (n K\norm{A_n})^{-1} B_n = iI + (K\norm{A_n})^{-1}A_n.
\end{equation}
By treating $\wt{B}_n$ as a perturbation of $iI$ and choosing $K$ sufficiently large (independent of $n$), it follows from~\cite[Theorem 16.2.8]{HNVW23}
that $\wt{B}_n$ has an $H^{\infty}(\Sigma_{\omega})$-calculus for $\omega \in (\pi/2,\pi)$, with a constant uniform in $n$.
By McIntosh's theorem, this yields~\eqref{eq:squarefunctionbound} uniformly in $n$.

\end{proof}

\begin{proof}[Proof of Theorem \ref{thm:endpoint-counterexample}]
Take the blocks from Proposition~\ref{prop:endpoint-blocks} and put
\[
 \widetilde{X}=\left(\bigoplus_{n\ge1}X_n\right)_{\ell^2},\quad
 \widetilde{A} =\bigoplus_{n\ge1} B_n, \quad 
 \cD(\widetilde{A})=\Big\{(u_n)\in \widetilde{X}:
                  \sum_{n\ge1}\norm{ B_nu_n}^2<\infty\Big\},
\]
so that $-\widetilde{A}$ generates the product semigroup
\[
 \widetilde{S}(t)=\bigoplus_{n\ge1}S_n(t).
\]
Exponential stability of $\widetilde{S}$ follows from~\eqref{eq:Snexpstab}.
Since the block norms tend to zero
for each $t>0$ (again by~\eqref{eq:Snexpstab}), $\widetilde{S}(t)$ is compact, and thus $\widetilde{A}^{-1}$ as well.  

Boundedness of $\widetilde{S}$ gives $\omega(\widetilde{A})\le\pi/2$. 
On the other hand,
the eigenvalues $n(\mu^2+i K \norm{A_n})$ of $\widetilde{A}$ have arguments tending to $\pi/2$ and thus $\omega(\widetilde{A})=\pi/2$.

For the square function estimates, observe that the functional calculus of $\widetilde{A}$ acts blockwise. Hence,
\[
 \norm{x}_{\phi,\widetilde{A}}^2 
 =\sum_{n\ge1}\norm{x_n}_{\phi,B_n}^2, \qquad x \in \widetilde{X},
\]
so that~\eqref{eq:endpointsqrfunc} follows using~\eqref{eq:squarefunctionbound}. 

Finally, testing \eqref{eq:maximal-inequality} on a single block gives $\cM_{2,\R_+}(\widetilde{S}) \ge\sup_{n\ge1}\cM_{2,\R_+}(S_n)
 =\infty$.
\end{proof}

\section{Extrapolation}
\label{sec:extrapolation}
In this section, $S$ denotes a general $C_0$-semigroup on a Hilbert space $X$.  For $I = [0,T]$ or $I = \R_+$ let us introduce the notation
\begin{equation}
    \mathcal E_\infty(I)
 :=L^\infty_{\cP}(\Omega;L^2(I;X)).
\end{equation}

In this section, we will prove the main results Theorems \ref{thm:main} and \ref{thm:main2}. Combining the results of Sections \ref{sec:squarefunc}-\ref{sec:endpoint-calculus}, we already obtain suitable semigroups $S$ such that $\cM_{2,\R_+}(S) = \infty$ (i.e., the failure of the maximal estimate with $p = 2$ and $I = \R_+$) on every stochastic basis. 
Below we will strengthen this to assertions on the unboundedness of the paths of the solution $U_g$ on $[0,T]$ with positive probability even for integrands $g \in \mathcal{E}_{\infty}([0,T])$. For this we will use an extrapolation argument which goes back to Burkholder \cite{Burkholder1973}. To keep the argument self-contained and focused on the counterexamples, we allow the stochastic basis to vary; see Remark~\ref{rem:fixed-basis} for the corresponding statements on a fixed stochastic basis.

Our proof will consist of several steps, suggested by the following proposition. 
\begin{proposition}[Qualitative to quantitative]
    \label{prop:extrap}
    Let $I=[0,T]$ or $I = \R_+$ and assume that $\sup_{t \in I}\norm{S(t)}_{\mathscr{L}(X)}<\infty$.
    Then the following are equivalent.
    \begin{enumerate}[label=(\arabic*)]
        \item\label{it1:extrap} For every stochastic basis and every \(g\in\mathcal E_\infty(I)\), it holds almost surely that
        \[
            \sup_{t \in I \cap \Q} \norm{U_g(t)} < \infty.
        \]
        \item\label{it2:extrap} For every $\varepsilon > 0$ and every stochastic basis there exists $K_{\varepsilon} < \infty$ such that for every \(g\in\mathcal E_\infty(I)\) we have the estimate
         \begin{equation*}
          \P\Big(
          \sup_{t \in I \cap \Q} \norm{U_g(t)} >K_\varepsilon
           \norm{g}_{\mathcal E_\infty(I)}
          \Big)\leq\varepsilon.
         \end{equation*}
        \item\label{it3:extrap} For every $\varepsilon > 0$ there exists $K_{\varepsilon} < \infty$ such that for every stochastic basis and every  \(g\in\mathcal E_\infty(I)\) we have the estimate
         \begin{equation*}
          \P\Big(
          \sup_{t \in I \cap \Q} \norm{U_g(t)} >K_\varepsilon
           \norm{g}_{\mathcal E_\infty(I)}
          \Big)\leq\varepsilon.
         \end{equation*}
        \item\label{it5:extrap} For every $0 < p < \infty$  it holds that $\cM_{p,I}(S) < \infty$, where the constant is uniform over all stochastic bases.
    \end{enumerate}
\end{proposition}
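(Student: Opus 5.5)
The plan is to prove the implications \ref{it5:extrap}$\Rightarrow$\ref{it1:extrap}$\Rightarrow$\ref{it2:extrap}$\Rightarrow$\ref{it3:extrap}$\Rightarrow$\ref{it5:extrap}. Two of these are short.

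\textbf{\ref{it5:extrap}$\Rightarrow$\ref{it1:extrap}.} Since $\mathcal E_\infty(I)\subset L^p_{\cP}(\Omega;L^2(I;X))$, a finite maximal constant gives a continuous version of $U_g$. Its supremum has finite $p$-th moment, so it is a.s.\ finite. The supremum over $I\cap\Q$ agrees a.s.\ with that of the continuous version.

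\textbf{\ref{it1:extrap}$\Rightarrow$\ref{it2:extrap}.} I argue by contradiction, with a Burkholder/gliding-hump argument. Suppose that on some stochastic basis there is $\varepsilon>0$ such that for every $K$ some $g$ violates the bound. Normalising, I pick $g_n$ with $\norm{g_n}_{\mathcal E_\infty(I)}=1$ and $\P(\sup_{t\in I\cap\Q}\norm{U_{g_n}(t)}>4^n)>\varepsilon$. I then enlarge the basis by taking a product with an independent sequence of Rademacher variables $(r_n)$, all made $\F_0$-measurable. This keeps $\beta$ a Brownian motion, and the $g_n$ stay predictable with the same laws. Set $g=\sum_n r_n2^{-n}g_n$, so that $\norm{g}_{\mathcal E_\infty(I)}\le 1$ and $U_g=\sum_n r_n 2^{-n}U_{g_n}$. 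Conditioning on $\F_\infty^\beta$ and using the symmetry (Lévy/contraction) principle for the $r_n$, each term dominates with probability at least $\tfrac12$. This gives $\P(\sup\norm{U_g}\ge 2^{n-1})\ge \varepsilon/2$ for every $n$, hence $\P(\sup=\infty)\ge\varepsilon/2$. That contradicts \ref{it1:extrap} on the enlarged basis.

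\textbf{\ref{it2:extrap}$\Rightarrow$\ref{it3:extrap}.} Uniformity over bases is a transference argument. It suffices to treat predictable step processes, which is justified by approximating in $\mathcal E_\infty$ and using stochastic continuity of $U_g(t)$ for fixed $t$. For such processes the joint law of $(g,\beta)$ determines the law of $(U_g(t))_{t\in I\cap\Q}$. Any such pair can be realised, with the same law, on one fixed canonical basis, namely Wiener space times $[0,1]$ carrying an $\F_0$ uniform variable, with the filtration generated by both. Concretely, $g$ is realised as a measurable functional of a uniform variable and the Brownian increments before each step time. The constant $K_\varepsilon$ of that single basis then works for all bases.

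\textbf{\ref{it3:extrap}$\Rightarrow$\ref{it5:extrap}.} This is the main obstacle, a good-$\lambda$ argument. Let $M=\sup_{t\in I}\norm{S(t)}$, fix $g$ a predictable step process, and write $U^*=\sup_{t\in I\cap\Q}\norm{U_g(t)}$ and $G=\norm{g}_{L^2(I;X)}$. Given $\lambda>0$ and $\delta>0$, let $\sigma$ be the first time $\norm{U_g}$ exceeds $\lambda$ (on rationals, approximated as needed). Let $\rho$ be the first time $\int_0^t\norm{g}^2$ exceeds $\delta^2\lambda^2$. After $\sigma$ we have
\[
U_g(t)=S(t-\sigma)U_g(\sigma)+\int_\sigma^t S(t-s)g(s)\dd\beta_s .
\]
The first term is bounded by $M\lambda$, up to the overshoot, which I control by working with continuous versions of step approximations. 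The second term is a stochastic convolution on the shifted basis $(\F_{\sigma+\cdot})$, with Brownian motion $\beta_{\sigma+\cdot}-\beta_\sigma$, conditioned on $\{\sigma<\infty\}$. Its integrand $g\1_{(\sigma,\rho]}$ has $\mathcal E_\infty$-norm at most $\delta\lambda$. Applying \ref{it3:extrap} on this new basis, which is exactly why uniformity over bases is needed, gives
\[
\P\bigl(U^*>(M+1+K_\varepsilon\delta)\lambda,\ G\le\delta\lambda\bigr)\le \varepsilon\,\P(U^*>\lambda).
\]
Choose $\beta_0=M+2$ and $\delta=1/K_\varepsilon$, then take $\varepsilon$ small with $\varepsilon\beta_0^p<\tfrac12$. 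The standard good-$\lambda$ integration then yields $\E (U^*)^p\lesssim_{p,\varepsilon,M}\E G^p$ for every $p\in(0,\infty)$. The a priori finiteness needed to absorb the left-hand side comes from truncating $U^*$, or from first treating $g$ with values in $\cD(A)$, where $U_g$ is a semimartingale.

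The delicate points are all in this last step. One must verify that the conditioned, shifted object really is a stochastic basis carrying a Brownian motion, so that \ref{it3:extrap} applies. One must check that restarting at $\sigma$ is compatible with the bounded-semigroup term. And one must handle the overshoot $\norm{U_g(\sigma)}$ being possibly larger than $\lambda$ when the paths are not yet known to be continuous. For the last point, I will first prove the estimate for $g$ with values in $\cD(A)$, where the paths are continuous, and then pass to general $g$ by density.
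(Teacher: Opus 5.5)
Your proposal is correct. Your (3)$\Rightarrow$(4) matches the paper: good-$\lambda$ with a restart at $\sigma$, the strong Markov property under $\P(\cdot\mid\sigma<\sup I)$, and $\cD(A)$-valued step processes for continuity. Your (4)$\Rightarrow$(1) is also the paper's. The first two implications take a different route.

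For (1)$\Rightarrow$(2), the paper applies the Baire category theorem to the closed sets $F_m=\{g:\P(\sup_{t\in I\cap\Q}\norm{U_g(t)}>m)\le\varepsilon/2\}$. This works on the given basis without enlarging it. Your Rademacher gliding hump needs independent $\F_0$-measurable signs, which is legitimate since (1) quantifies over all bases. In return it is constructive and produces an explicit $g$ with $\norm{g}_{\mathcal E_\infty(I)}\le 1$ and unbounded paths. Its symmetrisation is the paper's trick from (2)$\Rightarrow$(3), with Rademachers in place of the Gaussian cross terms $V_n$. It is cleanest via the measure-preserving flip of $(r_m)_{m\ne n}$, which fixes everything $\F$-measurable. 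Conditioning on $\F^\beta_\infty$ does not freeze the $U_{g_n}$, since $g_n$ may use more randomness than $\beta$.

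For (2)$\Rightarrow$(3), the paper glues countably many bad bases into one product basis with $\beta=\sum_n 2^{-n/2}\beta^n$. This needs neither a realisation lemma nor a density argument. Your transference to a single canonical basis is essentially the route sketched in Remark~\ref{rem:fixed-basis}, and it yields more, since one basis then controls all. It does cost two points you should make precise.
First, the realisation must reproduce the joint law of $(\xi_1,\dots,\xi_n)$ with the whole path of $\beta$. So $\xi_j$ must be built from regular conditional laws given $\beta|_{[0,t_{j-1}]}$, not only from grid increments.
Second, predictable step processes on deterministic grids are not norm-dense in $\mathcal E_\infty(I)$. For example, $\one_{(a,b]}(s)\sin(2\pi Ns)x_0$ with $N$ an unbounded $\F_a$-measurable integer stays at distance bounded below from all of them. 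So ``approximating in $\mathcal E_\infty$'' must be replaced by approximation in $L^2(\Omega;L^2(I;X))$ with $\norm{g_k}_{\mathcal E_\infty(I)}\le\norm{g}_{\mathcal E_\infty(I)}$. You can get this by scaling $\xi_j$ down on the cell where the energy budget $\norm{g}_{\mathcal E_\infty(I)}^2$ is exhausted and setting later values to zero. Then pass to the limit in probability on finite sets of rational times.
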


\begin{remark}
    It is an immediate consequence of Proposition~\ref{prop:extrap} that finiteness of $\cM_{p,I}(S)$ (uniform in the stochastic basis) is independent of $0 < p < \infty$.
\end{remark}

We will prove \ref{it1:extrap}$\to$\ref{it2:extrap}$\to$\ref{it3:extrap}$\to$\ref{it5:extrap}, with the implication \ref{it5:extrap}$\to$\ref{it1:extrap} being obvious.

\begin{proof}[Proof of \ref{it1:extrap} implies \ref{it2:extrap}]
Fix $\varepsilon > 0$, and for $m \geq 1$ define
\[
 F_m=\left\{g\in\mathcal E_\infty(I):
  \P\Bigl(\sup_{t \in I \cap \Q} \norm{U_g(t)}>m \Bigr)\leq\frac\varepsilon2\right\}.
\]
Using the assumption on $U_g$, it follows that $\bigcup_{m \geq 1} F_m = \mathcal E_\infty(I)$.
We also claim that each \(F_m\) is closed.  Indeed, suppose that
\(g_n\in F_m\) and \(g_n\to g\) in \(\mathcal E_\infty(I)\). Letting $(D_N)_{N\geq 1}$ be a sequence of finite sets which increase to $I \cap \Q$, we find for every $\eta > 0$ that
\[
 \begin{split}
 \P\Big(
  \max_{t\in D_N}\norm{U_g(t)}>m+\eta
 \Big)
 &\leq
 \P(\sup_{t \in I \cap \Q} \norm{U_{g_n}(t)}>m)
 +\P\Big(
   \max_{t\in D_N}\norm{U_{g - g_n}(t)}>\eta
  \Big) \\
  &\leq
 \frac{\varepsilon}{2} +\P\Big(
   \max_{t\in D_N}\norm{U_{g - g_n}(t)}>\eta
  \Big).
 \end{split}
\]
Taking $n \to \infty$ and using that $N$ is finite, we see that the left-hand side is bounded by $\varepsilon / 2$.
We then take $N \to \infty$ and $\eta \downarrow 0$ (in this order) to conclude that $g \in F_m$.
The Baire category theorem now gives \(m\), \(g_0\), and \(r>0\) such that
\(B(g_0,r)\subseteq F_m\).  If
\(\norm{h}_{\mathcal E_\infty(I)}<r\), then \(g_0\) and \(g_0+h\) both
belong to \(F_m\), and linearity and the union bound give
\[
 \P(\sup_{t \in I \cap \Q} \norm{U_h(t)} > 2m)\leq\varepsilon.
\]
By homogeneity, we subsequently recover the stated bound in \ref{it2:extrap} for all $g \in \mathcal E_\infty(I)$.
\end{proof}

\begin{proof}[Proof of \ref{it2:extrap} implies \ref{it3:extrap}]
We use the strategy of showing that a product probability space inherits the worst constant $K_{\varepsilon}$ of its factors, roughly speaking.

Suppose \ref{it3:extrap} does not hold. Put $a_n=2^{-n/2}$.
Then there exist an $\varepsilon_0>0$, stochastic bases $(\Omega^n, \F^n,(\F_t^n)_{t\geq 0}, \P^n)$  with Brownian motions
$\beta^n$, and predictable processes $g_n:I\times \Omega^n\to X$ satisfying
$\norm{g_n}_{L^\infty(\Omega^n;L^2(I;X))}\le 1$ and
\[
 \P^n\Big(\sup_{t\in I\cap\Q}\norm{U^{\beta^n}_{g_n}(t)}
       >n/a_n\Big)>\varepsilon_0,
\]
where the superscript $\beta^n$ indicates which Brownian motion is used in the stochastic convolution.
We write $(\Omega, \F,(\F_t)_{t\geq 0}, \P)$ for the completed product probability space, which we equip with the Brownian motion $\beta=\sum_{n\ge1}a_n\beta^n$.
We identify random variables on $\Omega^n$ with their lifts to $\Omega$, and additionally write $\P^{\not n}$ for the product probability measure of $(\P^j)_{j\geq 1,j \neq n}$.

By definition of $\beta$, we can decompose
\[
 U^\beta_{g_n}=a_nU^{\beta^n}_{g_n}+V_n,
 \qquad
 V_n(t)=\int_0^t S(t-s)g_n(s)\,
       \dd\Big(\sum_{m\ne n}a_m\beta^m_s\Big), \quad n \geq 1.
\]
Conditional on the entire $n$-th coordinate (i.e.\ for fixed $\omega_n$ with respect to $\P^{\not n}$), the family $(V_n(t))_{t\in I\cap\Q}$
 is a centred Gaussian and therefore symmetric. It follows that 
\begin{align*}
    \varepsilon_0 &<\P\Big(\sup_{t\in I\cap\Q}\norm{a_n U^{\beta^n}_{g_n}(t)}>n\Big) \\ 
    & \leq \P\Big(\sup_{t\in I\cap\Q}\norm{a_n U^{\beta^n}_{g_n}(t) + V_n(t)}>n\Big) + \P\Big(\sup_{t\in I\cap\Q}\norm{a_n U^{\beta^n}_{g_n}(t) - V_n(t)}>n\Big)
\\ 
    & = \E^n \P^{\not n}\Big(\sup_{t\in I\cap\Q}\norm{a_n U^{\beta^n}_{g_n}(t) + V_n(t)}>n\Big) + \E^n \P^{\not n}\Big(\sup_{t\in I\cap\Q}\norm{a_n U^{\beta^n}_{g_n}(t) - V_n(t)}>n\Big)  \\
    & = 2 \E^n \P^{\not n}\Big(\sup_{t\in I\cap\Q}\norm{a_n U^{\beta^n}_{g_n}(t) + V_n(t)}>n\Big)  
    \\ & = 2 \P\Big(\sup_{t\in I\cap\Q}\norm{U^\beta_{g_n}(t)}>n\Big),
\end{align*}
showing that~\ref{it2:extrap} does not hold on the product stochastic basis. 

\end{proof}

\begin{proof}[Proof of \ref{it3:extrap} implies \ref{it5:extrap}.]
Write $M = \sup_{t \in I}\norm{S(t)}_{\mathscr{L}(X)}$. Let $p>0$ and let $g$ be a predictable step process with values in $\cD(A)$.  Let $\theta > \max\{1,M\}$ and $0 < \varepsilon < \theta^{-p}$. Set $\delta =(\theta - M)/ (2K_{\varepsilon})$.
Taking into account that the constant $K_{\varepsilon}$ can be chosen independent of the stochastic basis, an adaptation of the three stopping time argument of~\cite[Lemma 3.5]{coxvw2024sharp} (which originates from~\cite{Burkholder1973}), results in the following good-$\lambda$ inequality:
\begin{align}\label{eq:goodlambda}
    \P(\norm{U_g}_{C(I;X)} > \theta \lambda, \norm{g}_{L^2(I;X)} \leq \delta \lambda) \leq \varepsilon \P(\norm{U_g}_{C(I;X)} \geq \lambda), \quad \lambda > 0.
\end{align}
For convenience of the reader, we briefly indicate the details. 

Fix $\lambda>0$. Let $\sigma$ be the first time that
$\norm{U_g}$ reaches $\lambda$, with $\sigma= \sup I$ if no such time exists. Set $V_{\lambda}= \{\sigma<\sup I\}$. 
Then
\[
    \{\norm{U_g}_{C(I;X)}>\theta\lambda\}\subseteq V_{\lambda}
    \subseteq \{\norm{U_g}_{C(I;X)}\geq\lambda\}.
\]
If $\P(V_{\lambda})=0$, \eqref{eq:goodlambda} is immediate.
Otherwise, define $\mathbb Q_\lambda=\P(\,\cdot\mid V_{\lambda})$.
Since $V_\lambda\in\F_\sigma$, the strong Markov property implies that
$\widehat\beta_s\coloneq\beta_{\sigma+s}-\beta_\sigma$
defines a Brownian motion under $\mathbb Q_\lambda$ with respect to the
shifted filtration $(\F_{\sigma+s})_{s\geq0}$.
Extend $g$ by zero beyond the original time interval and put
$\widehat g(s)=g(\sigma+s)$. Define
\[
    \rho=\inf\left\{
        r\geq0:
        \int_0^r\norm{\widehat g(s)}^2\,\dd s
        \geq\delta^2\lambda^2
    \right\},
    \qquad
    h=\widehat g\,\one_{[0,\rho]},
\]
where $\inf\varnothing=\infty$. Then $h$ is predictable for the
shifted filtration and
\[
    \norm{h}_{L^\infty(\mathbb Q_\lambda;L^2(I;X))}
    \leq\delta\lambda.
\]
Write
\[
    U_h(s)=\int_0^s S(s-r)h(r)\,\dd\widehat\beta_r,
    \qquad s\in I.
\]
Since $g$ is in $L^2(I;\cD(A))$ a.s., the same holds for $h$, and therefore, $U_h$ has a continuous version. By \ref{it3:extrap}, with the same constant $K_\varepsilon$ on this
new stochastic basis,
\[
    \mathbb Q_\lambda\left(
        \norm{U_h}_{C(I;X)}
        >K_\varepsilon\delta\lambda
    \right)\leq\varepsilon.
\]

On the event $\{\norm{U_g}_{C(I;X)}>\theta\lambda,\ \|g\|_{L^2(I;X)}\leq\delta\lambda\}$, one has $\rho = \sup I$, and hence
\[
    U_g(\sigma+s)=S(s)U_g(\sigma)+U_h(s),
    \qquad \sigma+s\in I.
\]
Moreover, continuity gives $\norm{U_g(\sigma)}=\lambda$.
Consequently, on this event,
\[
    \norm{U_h}_{C(I;X)}
    >(\theta-M)\lambda
    =2K_\varepsilon\delta\lambda.
\]
It follows that
\[
    \begin{aligned}
    \P(\norm{U_g}_{C(I;X)}>\theta\lambda,\ \|g\|_{L^2(I;X)}\leq\delta\lambda)
    &\leq \P(V_\lambda)\,
        \mathbb Q_\lambda\left(
            \norm{U_h}_{C(I;X)}
            >K_\varepsilon\delta\lambda
        \right)\\ &\leq\varepsilon\,\P(V_\lambda)\\ &\leq\varepsilon\,\P(\norm{U_g}_{C(I;X)}\geq\lambda),
    \end{aligned}
\]
which proves~\eqref{eq:goodlambda}.
Since $\theta > 1$, $\delta > 0$, $\varepsilon > 0$, and $\varepsilon \theta^p < 1$, we may now apply~\cite[Lemma 7.1]{Burkholder1973} using~\eqref{eq:goodlambda} to find
\begin{align}
    \E\norm{U_g}_{C(I;X)}^p \leq \frac{\theta^p \delta^{-p}}{1 - \theta^p \varepsilon} \E \norm{g}_{L^2(I;X)}^p,
\end{align}
hence $\cM_{p,I}(S) < \infty$.
\end{proof}

The interval $I$ was fixed throughout Proposition~\ref{prop:extrap}.
However, the next proposition shows that finiteness of $\cM_{p,I}(S)$ does not depend on $I$ if $S$ is exponentially stable.

\begin{proposition}[Independence of the interval]
    \label{prop:indepinterval}
    Suppose that $S$ is exponentially stable and let $I = [0,T]$ with $T > 0$. 
    Then we have $\cM_{2,I}(S) < \infty$ iff $\cM_{2,\R_+}(S) < \infty$, where we used constants which are uniform over all stochastic bases. 
\end{proposition}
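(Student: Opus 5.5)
The implication from $\R_+$ to $[0,T]$ is immediate. For a predictable step process $g$ on $[0,T]$, extend it by zero to $\R_+$. Then $\sup_{t\in[0,T]}\norm{U_g(t)}\leq\sup_{t\geq0}\norm{U_g(t)}$, while the $L^2$-norms of $g$ agree. Hence $\cM_{2,[0,T]}(S)\leq\cM_{2,\R_+}(S)$ on every stochastic basis, and uniformity carries over.

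For the converse, write $\norm{S(t)}\leq Me^{-\omega t}$ with $\omega>0$, and split $\R_+$ into the intervals $I_k=[kT,(k+1)T]$, $k\geq0$. For $t\in I_k$ we decompose
\[
U_g(t)=\sum_{j=0}^{k-1}S(t-(j+1)T)\,Y_j+Z_k(t),
\]
with the following pieces:
\begin{itemize}
\item $Y_j=\int_{jT}^{(j+1)T}S((j+1)T-s)g(s)\,\dd\beta_s$;
\item $Z_k(t)=\int_{kT}^{t}S(t-s)g(s)\,\dd\beta_s$.
\end{itemize}
By the strong Markov (here simply Markov, at deterministic times) property used in the proof of Proposition~\ref{prop:extrap}, $s\mapsto\beta_{kT+s}-\beta_{kT}$ is a Brownian motion for the shifted filtration, and $g(kT+\cdot)\one_{[0,T]}$ is predictable for it. The finiteness of $\cM_{2,[0,T]}(S)$, uniform over stochastic bases, therefore yields
\[
\E\sup_{t\in I_k}\norm{Z_k(t)}^2\leq \cM_{2,[0,T]}(S)^2\,\E\int_{I_k}\norm{g}^2\dd s.
\]
Moreover $Y_j=Z_j((j+1)T)$, so $\E\norm{Y_j}^2$ obeys the same bound with $I_j$ in place of $I_k$; the It\^o isometry with $\norm{S}\leq M$ would also suffice here.

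Now set $a_j^2=\E\int_{I_j}\norm{g}^2\,\dd s$. Bound the supremum over $\R_+$ by the sum over $k$ of the suprema over the $I_k$:
\[
\E\sup_{t\geq0}\norm{U_g(t)}^2\leq \sum_k \E\sup_{t\in I_k}\norm{U_g(t)}^2 .
\]
On $I_k$, the triangle inequality gives $\sup_{t\in I_k}\norm{U_g(t)}\leq \sum_{j<k}Me^{-\omega(k-1-j)T}\norm{Y_j}+\sup_{I_k}\norm{Z_k}$. Taking $L^2(\Omega)$ norms via Minkowski gives
\[
\Bigl(\E\sup_{I_k}\norm{U_g}^2\Bigr)^{1/2}\leq C\Bigl(\sum_{j<k}e^{-\omega(k-1-j)T}a_j+a_k\Bigr).
\]
The right-hand side is a discrete convolution of $(a_j)$ with the summable kernel $(e^{-\omega mT})_{m\geq 0}$ plus the identity, so Young's inequality yields
\[
\sum_k\E\sup_{I_k}\norm{U_g}^2\leq C'\sum_j a_j^2=C'\,\E\norm{g}_{L^2(\R_+;X)}^2,
\]
with $C'$ depending only on $M$, $\omega$, $T$ and $\cM_{2,[0,T]}(S)$.

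The only delicate point is the restart step: $\cM_{2,[0,T]}(S)$ must be applied on the shifted stochastic basis, which is exactly why uniformity over stochastic bases is assumed. If one prefers to avoid changing the basis, one can instead treat $g\one_{I_k}$ as a predictable process on $[0,(k+1)T]$ and use exponential stability. Alternatively, one can simply invoke uniformity, as in the proof of Proposition~\ref{prop:extrap}. Measurability of the suprema is handled by restricting to step processes with continuous versions, as in the definition of $\cM_{p,I}$.
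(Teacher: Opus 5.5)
Your proof is correct and is essentially the paper's argument: you use the same block decomposition into $Y_j$ and $Z_k$, and you apply the $[0,T]$ estimate to each block on the shifted basis $\beta_{kT+\cdot}-\beta_{kT}$, which is exactly where uniformity over stochastic bases enters. The paper sums the blocks by a pathwise Cauchy--Schwarz bound $\sup_t\norm{U_g(t)}^2\lesssim\sum_k\sup_{r\in[0,T]}\norm{Z_k(kT+r)}^2$ instead of your Minkowski-plus-Young step, a purely cosmetic difference; you should, however, drop your aside about avoiding the basis change by viewing $g\one_{I_k}$ on $[0,(k+1)T]$, since that would require a maximal estimate on $[0,(k+1)T]$, which is precisely what is being proved.
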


\begin{proof}
We only prove the forward implication; the converse is trivial.
Assume $\cM_{2,I}(S) < \infty$ and let $g$ be a predictable step process.
For $n \geq 0$ and $r \in [0,T]$, we define
\begin{align}
    V_n(r)=\int_{nT}^{nT+r}S(nT+r-s)g(s)\,\dd\beta_s,
\end{align}
so that (when using continuous versions), the semigroup identity gives almost surely
\begin{align}
    U_g(nT + r) =\sum_{k < n} S(r + (n - k - 1)T)V_k(T) + V_n(r), \quad n \geq 0, \, r \in [0,T].
\end{align}
Using the exponential stability of $S$, we may use Cauchy--Schwarz to estimate
\begin{align}
    \sup_{t \geq 0}\norm{U_g(t)}^2 \lesssim \sum_{n\geq 0} \sup_{r \in [0,T]}\norm{V_n(r)}^2.
\end{align}
Taking expectations and using Fubini's theorem, we may
 apply the maximal estimate individually to each $V_n$ with the Brownian motion $t \mapsto \beta_{nT + t} - \beta_{nT}$ (taking into account the uniformity over the stochastic basis) and find
\begin{align}
    \E \sup_{t \geq 0}\norm{U_g(t)}^2 \lesssim \sum_{n}\E \norm{g}_{L^2([nT,(n+1)T];X)}^2 = \E\norm{g}_{L^2(\R_+;X)}^2,
\end{align}
so that indeed $\cM_{2,\R_+}(S) < \infty$.
\end{proof}

\begin{proof}[Proof of Theorem~\ref{thm:main}]
Take the operator $A$ from Proposition~\ref{prop:sqrfuncexample}.
Then it follows that \ref{it:main1} holds.
Moreover, since the lower square-function estimate does not hold, 
Theorem~\ref{thm:necessary-square} gives  that
$\cM_{2,\R_+}(S)=\infty$.
By Proposition~\ref{prop:indepinterval} we obtain $\cM_{2,I}(S)=\infty$ for $I = [0,1]$. Finally an application of Proposition~\ref{prop:extrap} shows that there exists a stochastic basis and $g\in \mathcal{E}_\infty(I)$ such that $\sup_{t \in I\cap \Q} \norm{U_g(t)} = \infty$ occurs with nonzero probability.  
\end{proof}

\begin{proof}[Proof of Theorem~\ref{thm:main2}]
Take the operator $A$ from Theorem~\ref{thm:endpoint-counterexample}.
Then \ref{it1:main2} holds. The two-sided square-function estimates and McIntosh's theorem,
recalled in Section~\ref{sec:squarefunc}, give
$\omega_{H^\infty}(A)=\omega(A)=\pi/2$.
Thus \ref{it2:main2} holds.
Since $\cM_{2,\R_+}(S)=\infty$, the remaining assertions follow
exactly as in the proof of Theorem~\ref{thm:main}.
\end{proof}

\begin{remark}\label{rem:fixed-basis}
Applying \cite[Lemma~1.3(b)]{Kurtz2014} successively to the coefficients of predictable step processes, followed by tedious approximation arguments, one can transfer the tail and maximal estimates between stochastic bases with unchanged constants. Thus Proposition~\ref{prop:extrap} and Theorems~\ref{thm:main} and~\ref{thm:main2} can also be formulated on any fixed stochastic basis.  \end{remark}

\begin{remark}\label{rem:extrapolationMp}
With a variation of the good-$\lambda$ argument in the implication \ref{it3:extrap}$\to$\ref{it5:extrap}, one can show that $\cM_{p_0,I}(S)<\infty$ implies $\cM_{p,I}(S)<\infty$ for all $p\in (0,\infty)$ on a fixed stochastic basis. Indeed, it suffices to show \eqref{eq:goodlambda}. 
Choose $\theta>\max\{1,M\}$, and take $\delta>0$ so small that
\[
    \varepsilon :=\left(\frac{2\cM_{p_0,I}(S) \delta}{\theta-M}\right)^{p_0}
    <\theta^{-p}.
\]
Let $g$ be a predictable step process. Fix $\lambda>0$. Let $\sigma$ be the first time that $\norm{U_g}$
reaches $\lambda$, and let $\tau$ be the first time that
$\int_0^t\norm{g(s)}^2\,\dd s$ reaches $4\delta^2\lambda^2$,
with the usual terminal-time conventions. Define $h(t)=\one_{\{\sigma<t\leq\tau\}}g(t)$. This process is predictable on the original stochastic basis, and
\[
    \norm{h}_{L^2(I;X)}
    \leq 2\delta\lambda\,\one_{\{\norm{U_g}_{C(I;X)}\geq\lambda\}}.
\]
The assumed maximal estimate, and Markov's inequality therefore give
\[
    \P\Big(\sup_{t\in I}\norm{U_h(t)}
        >(\theta-M)\lambda\Big)
    \leq
    \frac{\cM_{p_0,I}(S)^{p_0}\E\norm{h}_{L^2(I;X)}^{p_0}}
         {((\theta-M)\lambda)^{p_0}} \leq \varepsilon \P(\norm{U_g}_{C(I;X)}\geq\lambda).
\]
On $\{\norm{U_g}_{C(I;X)}>\theta\lambda,\ \norm{g}_{L^2(I;X)}\leq\delta\lambda\}$, one has $\tau = \sup I$, and thus, 
\[
    U_h(t)=U_g(t)-S(t-\sigma)U_g(\sigma), \ \ \  t\geq\sigma, 
\]
Since $\norm{U_g(\sigma)}=\lambda$, this implies $\sup_{t\in I}\norm{U_h(t)}>(\theta-M)\lambda$, and hence \eqref{eq:goodlambda} follows.
\end{remark}

\section{A maximal estimate for a smaller class of integrands}
\label{sec:special-processes}
We have seen that for general semigroups, maximal estimates with predictable integrands which are in the space $L^\infty(\Omega;L^2(I;X))$ may fail.
We will now show that a maximal estimate can be recovered upon restricting the integrands to the space 
\[
\{g:[0,T]\times \Omega\to X: g \ \text{is predictable and } \ \int_0^T \|g(t,\cdot)\|_{L^\infty(\Omega;X)}^2 \dd t<\infty\}.\]
Then this space is a strict subset of \(L^\infty(\Omega;L^2(I;X))\), and includes all deterministic integrands in $L^2(I;X)$.
We avoid the notation \(L^2(I;L^\infty(\Omega;X))\) to avoid measurability and approximation problems in Bochner spaces in the potentially nonseparable space $L^\infty(\Omega;X)$. 
As will become apparent from the proof, the maximal estimate holds because the smaller space of integrands allows for deterministic control of $g$ at any fixed time $t \in I$.

\begin{theorem}[Energy-adapted chaining]
\label{thm:chaining}
Let $I = [0,T]$ for $T > 0$.
For every predictable $g:[0,T]\times \Omega\to X$ such that
\[\int_0^T \|g(t,\cdot)\|_{L^\infty(\Omega;X)}^2 \dd t<\infty,\]
the process $U_g$ has a continuous version.
Moreover, for every $p \in [1,\infty)$ one has
\begin{equation}
 \left\|
  \sup_{t \in I}\norm{U_g(t)}
 \right\|_{L^p(\Omega)}
 \leq C_p M_I^2 \Big(\int_0^T \|g(s,\cdot)\|_{L^\infty(\Omega;X)}^2 \dd s\Big)^{1/2},
 \label{eq:chaining-estimate}
\end{equation}
where $C_p = \frac{2^{3/4}B_{\max\{p,4\}}} {2^{1/4} - 1}$, $M_I = \sup_{t \in I}\norm{S(t)}_{\mathscr{L}(X)}$, and $B_q$ is as in \eqref{eq:BDG}. 
\end{theorem}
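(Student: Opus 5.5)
Write $\gamma(t)\coloneq\|g(t,\cdot)\|_{L^\infty(\Omega;X)}$ and $E(t)\coloneq\int_0^t\gamma(s)^2\,\dd s$, so $E(T)<\infty$. The plan is a chaining argument adapted to the energy clock $E$ rather than to physical time. The deterministic envelope $\gamma$ makes $E$ a \emph{deterministic} clock. At each scale $k\ge0$ we choose deterministic partition points $0=t^k_0<t^k_1<\dots<t^k_{2^k}=T$ splitting $[0,T]$ into $2^k$ intervals of equal energy $2^{-k}E(T)$. These are taken nested, so that the scale-$k$ points are among the scale-$(k+1)$ points; generalised inverses of $E$ handle flat pieces. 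For $t\in[0,T]$ let $\pi_k(t)$ be the largest scale-$k$ point $\le t$. The goal is a telescoping decomposition
\[
U_g(t)=U_g(\pi_0(t))+\sum_{k\ge0}\bigl(U_g(\pi_{k+1}(t))-U_g(\pi_k(t))\bigr)
\]
together with summable bounds on the increments.

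The key identity for increments between $s<u$ is
\[
U_g(u)-U_g(s)=\bigl(S(u-s)-I\bigr)U_g(s)+\int_s^u S(u-r)g(r)\,\dd\beta_r .
\]
This is problematic, because $(S(u-s)-I)$ is not small uniformly. I will therefore \emph{not} chain $U_g$ directly. Instead, for fixed grid point $u=\pi_{k+1}(t)$ at level $k+1$ with parent $s=\pi_k(t)$, write $U_g(u)=S(u-s)U_g(s)+\int_s^uS(u-r)g(r)\,\dd\beta_r$. Iterating this down the dyadic tree expresses $U_g$ at a grid point of level $n$ as a sum over $k<n$ of terms $S(\cdot)\,W_k$, where
\[
W_k=\int_{\pi_k}^{\pi_{k+1}}S(\pi_{k+1}-r)g(r)\,\dd\beta_r .
\]
Each such term has norm at most $M_I\|W_k\|$; the factor $M_I$ accounts for one factor of $M_I^2$. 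For fixed endpoints, $W_k$ is the value at time $\pi_{k+1}$ of the martingale $v\mapsto\int_{\pi_k}^{v}S(\pi_{k+1}-r)g(r)\,\dd\beta_r$. By \eqref{eq:BDG} with exponent $q=\max\{p,4\}$,
\[
\|W_k\|_{L^q}\le B_qM_I\Bigl(\int_{\pi_k}^{\pi_{k+1}}\gamma^2\Bigr)^{1/2}\le B_qM_I\,2^{-k/2}E(T)^{1/2},
\]
using the deterministic envelope; this is where it is essential. This gives the second factor $M_I$.

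To pass to the supremum over $t$, control $\max_j\|W_{k,j}\|$ over the $2^k$ level-$k$ intervals by $\bigl(\sum_j\|W_{k,j}\|^q\bigr)^{1/q}$. This costs a factor $2^{k/q}$, so the level-$k$ contribution is of order $2^{k/q-k/2}$. With $q\ge4$ this is at most $2^{-k/4}$, and summing the geometric series gives $\sum_k2^{-k/4}=\frac{1}{1-2^{-1/4}}=\frac{2^{1/4}}{2^{1/4}-1}$. This matches the stated $C_p$ up to the prefactor $2^{1/2}$, which I expect to arise from comparing the endpoint at level $0$ and the rounding $2^{k}$ versus $2^{k+1}$ intervals. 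Minkowski in $L^q$ followed by $\|\cdot\|_{L^p}\le\|\cdot\|_{L^q}$ then yields \eqref{eq:chaining-estimate} for the supremum over all grid points.

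The main obstacle is passing from grid points to all $t\in[0,T]$ and obtaining continuity. Take the union $D$ of all grid points. If $E$ is strictly increasing, $D$ is dense, and the uniformly summable bound shows that the partial chain sums converge uniformly a.s. on $D$, giving uniform continuity of $U_g|_D$ with respect to the pseudometric $E$. On intervals where $E$ is constant, $g=0$ a.s., so $U_g(t)=S(t-s)U_g(s)$ there, which is continuous by strong continuity. One then checks that this agrees a.s. with $U_g(t)$ at every fixed $t$, by an $L^2$ continuity argument via It\^o's isometry and strong continuity of $S$, so it is a continuous version. The supremum bound passes to it by continuity.
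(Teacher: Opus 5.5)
Your bound for the supremum over grid points follows the paper's argument: the same energy-adapted nested dyadic cells, the same representation $U_g(t)=\sum_J S(t-r_J)U_g(J)$ with at most one cell per level, \eqref{eq:BDG} applied with the deterministic envelope, the $2^{n/q}$ union bound, and $q=\max\{p,4\}$. With your clock $E$ you even avoid the factor $2^{1/2}$. In the paper that factor comes from the strictly increasing clock $\int_0^t(1+\gamma^2)\,\dd s$, whose level-$n$ cells carry energy up to $2^{-n+1}$. It does not come from the level-$0$ endpoint.

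The gap is the continuity step. Uniform a.s.\ convergence of the partial chain sums on $D$ gives uniform continuity of $U_g|_D$ only if those partial sums are themselves continuous, and they are not. Write $\mathcal D_N$ for the set of level-$N$ cells. The level-$N$ partial sum at $t$ equals $S(t-\pi_N(t))U_g(\pi_N(t))$. It jumps at level-$N$ points, and inside a cell it varies by $(S(h)-I)U_g(\pi_N(t))$. This is exactly the term you noted the chain cannot control. For the same reason there is no modulus of continuity in terms of $E$: on a flat piece of $E$ the path $S(t-s)U_g(s)$ still moves. So your final appeal to uniform continuity with respect to $E$ is not supported by what you proved.

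The step can be repaired within your framework. Set $U^{(N)}(t)\coloneq S(t-\pi_N(t))U_g(\pi_N(t))$ for all $t\in[0,T]$.
\begin{itemize}
\item One has $\sup_{t\in[0,T]}\norm{U^{(N+1)}(t)-U^{(N)}(t)}\le M_I\max_{J\in\mathcal D_{N+1}}\norm{U_g(J)}$. This is a.s.\ summable, so $U^{(N)}$ converges uniformly on $[0,T]$.
\item Each $U^{(N)}$ is right-continuous and continuous inside level-$N$ cells, by strong continuity of $S$ applied to finitely many random vectors. Its jumps are $U_g(J)$, $J\in\mathcal D_N$, and their maximal size tends to $0$ a.s. Hence the uniform limit is continuous.
\item The bound $\E\norm{U_g(t)-U^{(N)}(t)}^2\le M_I^2 2^{-N}E(T)$ identifies the limit as a version of $U_g$.
\end{itemize}
This also handles flat pieces of $E$ automatically. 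Your gluing via $\norm{S(t-s)U_g(s)}\le M_I\norm{U_g(s)}$ would cost a third factor $M_I$, and it needs care when there are countably many flat pieces.

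Once repaired, your route gets continuity for general $g$ directly from the chaining, with no factorisation and no truncation. The paper argues differently. It takes continuity for bounded $g$ from the factorisation method and uses a strictly increasing clock so that the grid points are dense. It then reaches general $g$ by truncating to $g\one_{\{\norm{g}\le k\}}$ and applying the estimate to differences, which gives a Cauchy sequence in $L^p(\Omega;C([0,T];X))$.
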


\begin{proof}
By rescaling time, we only need to prove the case $T = 1$. We first prove  the result in the case that $g\in L^\infty([0,1]\times \Omega;X)$. 
Since $g$ is uniformly bounded, the continuity follows from the factorization method \cite{DaPratoKwapienZabczyk1987}. Therefore, it remains to prove the estimate \eqref{eq:chaining-estimate}.
By homogeneity  we may additionally assume that $\int_0^1 \|g(t,\cdot)\|_{L^\infty(\Omega;X)}^2 \dd t = 1$.

For an interval $(a,b] = J \subset [0,1]$, we introduce by analogy with $U_g(t)$ the notation
\begin{align*}
    U_g(J) \coloneq \int_a^b S(b-s)g(s) \dd \beta_s.
\end{align*}

We begin by partitioning the time interval in a way which is tailored to $g$.
We define $\theta \colon [0,1] \to [0,2]$ by
\begin{equation}
    \theta(t) =\int_0^t 1 + \norm{g(s,\cdot)}^2_{L^{\infty}(\Omega;X)} \dd s.
\end{equation}
Using dominated convergence, we see that $\theta$ is continuous, strictly increasing and we have $\theta(0) = 0$ and $\theta(1) = 2$.
Thus, $\theta$ has a continuous strictly increasing inverse $\varrho \colon [0,2] \to [0,1]$, from which we define the binary nested sequence of partitions
\begin{align*}
    J_{n,k} &\coloneq (\varrho(k 2^{-n+1}), \varrho((k+1)2^{-n+1})], \qquad 0 \leq k < 2^n, \\
    \mathcal{D}_n &\coloneq \{J_{n,k} \}_{0 \leq k < 2^n}.
\end{align*}
Finally, we set $\mathcal{T} \coloneq \{ \varrho(k 2^{-n+1}) : n \geq 0, \, 0 \leq k < 2^n \}$.
Note that $\mathcal{T}$ is dense in $[0,1]$, since $\varrho$ is continuous and surjective.

We record the fact that for any $J \in \mathcal{D}_n$, we have $\int_J \norm{g(t,\cdot)}_{L^{\infty}(\Omega;X)}^2 \dd t \leq 2^{-n+1}$ by construction.
Thus, for $q \in (0,\infty)$, the bound \eqref{eq:BDG} gives
\begin{equation}
    \label{eq:bdgchain}
    \norm{U_g(J)}_{L^q(\Omega;X)} \leq \sqrt{2} B_q 2^{-n/2}M_{I}, \quad J \in \mathcal{D}_n.
\end{equation}
Note that the intervals $J \in \mathcal{D}_n$ do not depend on $\omega$, which is crucial for this inequality.

Now fix $t \in \mathcal{T}$.
It follows from our construction that $(0,t]$ can be partitioned into a finite set of intervals $\mathcal{A} \subset \bigcup_{n \geq 0}\mathcal{D}_n$ which contains at most one interval from each $\mathcal{D}_n$.
Using the semigroup identity and letting $r_{J}$ denote the right endpoint of $J$, it then follows that almost surely
\[
 U_g(t)=\sum_{J\in \mathcal{A}}S(t-r_J)U_g(J).
\]
Since $t$ was arbitrary, this results in the almost sure inequality
\begin{equation}
\label{eq:treebound}
 \sup_{t \in \mathcal{T}}\norm{U_g(t)}
 \leq M_{I} \sum_{n \geq 0} \max_{J \in \mathcal{D}_n}\norm{U_g(J)}.
\end{equation}
We want to estimate the right-hand side term by term in $L^p(\Omega)$.
To do this, note that for $q \in [p,\infty)$, the embeddings $L^q(\Omega) \hookrightarrow L^p(\Omega)$ and $\ell^q \hookrightarrow \ell^{\infty}$ combined with Fubini's theorem give 
\begin{align}
    \label{eq:Dnbound}
    \norm{\max_{J \in \mathcal{D}_n}\norm{U_g(J)}}_{L^p(\Omega)}
    \leq 2^{n/q} \max_{J \in \mathcal{D}_n} \norm{U_g(J)}_{L^q(\Omega;X)} \leq \sqrt{2} B_q M_{I} 2^{n/q - n/2},
\end{align}
with the second step following from~\eqref{eq:bdgchain}.
Choosing $q = \max\{4,p\}$ the right-hand side of~\eqref{eq:Dnbound} becomes summable in $n$, so the triangle inequality applied to~\eqref{eq:treebound} results in
\begin{equation}
    \norm{\sup_{t \in \mathcal{T}} \norm{U_g(t)}}_{L^p(\Omega)} \leq 2^{1/2}M_{I}^2 B_{q} \sum_{n\geq 0}  2^{n/q - n/2} \leq \frac{2^{3/4}M_{I}^2 B_{q}} {2^{1/4} - 1}.
\end{equation}
By density, we may replace $\mathcal{T}$ by $[0,1]$ in the above estimate (using the continuous version of $U_g$), which concludes the proof of the case where $g$ is uniformly bounded.

To treat the general case we cannot use approximation by adapted step processes, since these are not dense in the norm we are using. Instead, we can use an approximation argument through truncations. Indeed, 
let $g_k(s) = g(s)\one_{\norm{g(s)} \leq k}$. Applying \eqref{eq:chaining-estimate} to the bounded processes $g_k - g_\ell$, it follows that $(U_{g_k})_{k\geq 1}$ is a Cauchy sequence in $L^p(\Omega;C([0,1];X))$ and hence convergent. Since the limit is a version of $U_g$, this completes the proof of the continuity for general $g$. The estimate \eqref{eq:chaining-estimate} follows by taking limits as well.  
\end{proof}

We also provide a different proof, which makes use of a time change to reduce to the case where $g \in L^{\infty}([0,1]\times \Omega ;X)$. Note that this proof might not recover the same constant in~\eqref{eq:chaining-estimate}.

\begin{proof}[Alternative proof]
By the same reductions as in the proof above, it is enough to prove~\eqref{eq:chaining-estimate} for $T = 1$ and $g\in L^\infty([0,1]\times \Omega;X)$ with the normalization $\int_0^1 \|g(t,\cdot)\|_{L^\infty(\Omega;X)}^2 \dd t = 1$ (in which case $U_g$ has a continuous version).
We let $\varrho$ be as in the proof above, and we will make a time change according to $t = \varrho(t')$.
To do this, we define the following objects:
\begin{align*}
    \widetilde{S}(t',s') &= S(\varrho(t') - \varrho(s')),  \qquad 0 \leq s' \leq t' \leq 2,\\
    \widetilde{\beta}(t') &= \int_0^{\varrho(t')}\sqrt{1 + \norm{g(s,\cdot)}_{L^{\infty}(\Omega;X)}^2} \dd \beta_s, \qquad t' \in [0,2], \\
    \widetilde{g}(s') &= \frac{g(\varrho(s'))}{\sqrt{1 + \norm{g(\varrho(s'),\cdot)}_{L^{\infty}(\Omega;X)}^2}}, \qquad s' \in [0,2].
\end{align*}
Note that $\widetilde{S}$ is a $C_0$-evolution family (by continuity of $\varrho$), $\widetilde{\beta}$ is a Brownian motion with respect to a time-changed filtration (by L\'evy's characterization), and $\tilde{g}$ is predictable with respect to this filtration.
By the deterministic time change formula, we then obtain the identity
\begin{align*}
    U_g(\varrho(t')) = \int_0^{t'}\widetilde{S}(t',s')\widetilde{g}(s') \dd \widetilde{\beta}_{s'}, \qquad t' \in [0,2].
\end{align*}
After observing that $\norm{\tilde{g}(s')} \leq 1$ for all $s' \in [0,2]$, we may apply the factorisation method \cite{Seidler1993} to the right-hand side to find
\begin{align}
    \left\lVert \sup_{t' \in [0,2]} \norm{U_g(\varrho(t'))}_{X}\right\rVert_{L^p(\Omega)} \lesssim M_I^2 \sqrt{p}.
\end{align}
Since $\varrho$ is surjective onto $[0,1]$, this exactly coincides with~\eqref{eq:chaining-estimate}. \qedhere

\end{proof}

\begin{remark}[Martingale type~\(2\)]
The Hilbert space assumption in Theorem~\ref{thm:chaining} can be
relaxed to assuming that \(X\) is a separable Banach space of
martingale type~\(2\). 
The only change in the proof is that one needs to use the martingale type-2 version of the Burkholder--Davis--Gundy inequality instead of the Hilbert space version (see \cite{Sei10}).
The constant $C_p$ in~\eqref{eq:chaining-estimate} will subsequently incur a dependence on the geometry of $X$. 

Moreover, the one-dimensional Brownian motion may be replaced by a cylindrical Brownian motion on a Hilbert space $H$. The integrability condition on $g$ needs to be replaced by 
\[\int_0^T \|g(t,\cdot)\|_{L^\infty(\Omega;\gamma(H,X))}^2 \dd t<\infty.\]

Here $\gamma(H,X)$ is as in \cite[Chapter 9]{HNVW2017}, and coincides with the Hilbert--Schmidt operators when $X$ is a Hilbert space. 
\end{remark}

\begin{remark}[Evolution families]
The semigroup may also be replaced by a (deterministic) strongly continuous evolution family
\[
 \mathscr S=(\mathscr S(t,s))_{0\leq s\leq t\leq T}
 \subseteq\mathcal L(X),
\]
with an analogous proof.
\end{remark}

\subsection*{AI disclosure statement}
GPT 5.6 and 6 Pro were used during the preparation of this paper to explore proof strategies, organize intermediate {\LaTeX} drafts, and to assist in checking correctness. The authors retain full responsible for the content of the paper.

\bibliographystyle{alpha}
\bibliography{smr}

\end{document}